\newcommand{\bt}{\begin{Theorem}}
\newcommand{\et}{\end{Theorem}}
\newcommand{\bi}{\begin{itemize}}
\newcommand{\ei}{\end{itemize}}
\newcommand{\bea}{\begin{eqnarray}}
\newcommand{\ba}{\begin{array}}
\newcommand{\eea}{\end{eqnarray}}
\newcommand{\ea}{\end{array}}
\newcommand{\supp}{\mbox{supp}}
\newtheorem{Definition}{Definition}[section]
\newtheorem{Theorem}[Definition]{Theorem}
\newtheorem{Lemma}[Definition]{Lemma}
\newtheorem*{theoremA*}{Theorem A}
\newtheorem*{theoremB*}{Theorem B}
\newtheorem*{lemmaA*}{Lemma A}
\newtheorem*{lemmaB*}{Lemma B1}
\newtheorem*{lemmaB1*}{Lemma B2}
\newtheorem*{Proofofmainthm*}{Proof of main theorem}
\newcommand{\be}{\begin{equation}}
\newcommand{\ee}{\end{equation}}
\newcommand{\R}{\mathbb R}%
\newcommand{\C}{\mathbb C}%
\newcommand{\Z}{\mathbb Z}%
\newcommand{\x}{\mathfrak X}%
\renewcommand{\Re}{\mbox{Re }}
\begin{document}
\baselineskip16pt

\author[Pratyoosh Kumar and Sumit Kumar Rano]{Pratyoosh Kumar and Sumit Kumar Rano }
\address{Department of Mathematics, Indian Institute of Technology Guwahati, 781039, India.
E-mail: pratyoosh@iitg.ac.in and s.rano@iitg.ac.in}

\title[Theorem of Roe and Strichartz on homogeneous trees ]
{A Theorem of Roe and Strichartz on homogeneous trees}
\subjclass[2010]{Primary 43A85 Secondary 39A12, 20E08}
\keywords{Homogeneous Tree, spectrum of Laplacian, Eigenfunction of Laplacian, Fourier analysis }
%\thanks{Author was supported by research fellowship from CSIR (India).}

\begin{abstract}  In 1980, J. Roe proved that if $\{f_{k}\}_{k\in\mathbb{Z}}$ is doubly infinite sequence of functions in $\mathbb{R}$ which is uniformly bounded and satisfies $(df_{k}/dx)=f_{k+1}$ for all $k\in\mathbb{Z}$ then $f_{0}(x)=a\sin(x+\theta)$ for some $a,\theta\in\mathbb{R}$. Later in 1993 Strichartz suitably extended the above result to $\mathbb{R}^n$. In this article we prove a version of their result for homogeneous trees.
\end{abstract}

\maketitle

\section{Introduction}
Let $\{f_{k}\}_{k\in\mathbb{Z}}$ be a doubly infinite sequence of real-valued functions of a real variable with
	$\frac{d}{dx}f_{k}=f_{k+1}.$ In 1980, J. Roe \cite{R} proved that
	if there exists a constant $M>0$ such that $|f_{k}(x)|\leq M\text{ for all }n\text{ and }x$
	then $f_{0}(x)=a\sin(x+\theta)$ for some $a,\theta\in\mathbb{R}$. Many generalization of this result is available in the literature (see \cite{HR1}, \cite{HR2}).
In 1993, Strichartz \cite{S} extended the above result to $\mathbb{R}^n$ by substituting $d/dx$ with the Laplacian $\Delta_{\mathbb{R}^{n}}$ on $\mathbb{R}^n$. Strichartz's result can be stated as follows.
\begin{Theorem}[Strichartz]\label{th2}
	Let $\{f_{k}\}_{k\in\mathbb{Z}}$ be a doubly infinite sequence of functions in $\mathbb{R}^{n}$ satisfying $\Delta_{\mathbb{R}^{n}} f_{k}=f_{k+1}$ for all $k\in\mathbb{Z}$ and $|f_{k}(x)|\leq M$ for all $k\in\mathbb{Z}$ and $x\in\mathbb{R}^{n}$, where $M$ is a real number. Then $\Delta_{\mathbb{R}^{n}} f_{0}=-f_{0}$.
\end{Theorem}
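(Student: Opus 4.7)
The plan is to pass to the Fourier side. Since each $f_k$ is bounded by $M$, it is a tempered distribution, and its Fourier transform $\hat f_k$ is defined by $\langle \hat f_k, \phi\rangle = \langle f_k, \hat\phi\rangle$ for $\phi \in \mathcal{S}(\mathbb{R}^n)$; crucially, $|\langle \hat f_k, \phi\rangle| \leq M\|\hat\phi\|_1$ \emph{uniformly in $k$}. The recurrence $\Delta f_k = f_{k+1}$ becomes the multiplier relation $-|\xi|^2 \hat f_k = \hat f_{k+1}$, which iterates forward to $\hat f_k = (-|\xi|^2)^k \hat f_0$ and backward to $\hat f_0 = (-|\xi|^2)^k \hat f_{-k}$, both for $k \geq 0$. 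The whole theorem then reduces to showing that $\hat f_0$ is supported on the unit sphere $\{|\xi| = 1\}$, because then $-|\xi|^2 \hat f_0 = -\hat f_0$ as distributions, which is exactly $\Delta f_0 = -f_0$.

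To confine $\supp \hat f_0$ from outside, I fix $\phi \in C_c^\infty(\mathbb{R}^n)$ supported in $\{|\xi| > 1 + \varepsilon\}$ and set $\psi_k = \phi/|\xi|^{2k}$, which is again in $C_c^\infty(\mathbb{R}^n)$. Since $|\xi|^{2k}\psi_k = \phi$, one has
\begin{equation*}
\langle \hat f_0, \phi\rangle = (-1)^k \langle \hat f_k, \psi_k\rangle = (-1)^k \int_{\mathbb{R}^n} f_k(x)\, \hat{\psi}_k(x)\, dx,
\end{equation*}
so $|\langle \hat f_0, \phi\rangle| \leq M \|\hat{\psi}_k\|_1$. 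Integration-by-parts yields $\|\hat\psi_k\|_1 \leq C_N \sum_{|\alpha| \leq N} \|\partial^\alpha \psi_k\|_\infty$ for $N>n$ (the implicit constant depends only on the fixed compact set $\supp \phi$), and the Leibniz rule combined with $|\partial^\gamma |\xi|^{-2k}| \leq C_\gamma k^{|\gamma|} (1+\varepsilon)^{-2k-|\gamma|}$ on $\supp \phi$ gives $\|\hat\psi_k\|_1 = O(k^{2N}(1+\varepsilon)^{-2k}) \to 0$. Hence $\langle \hat f_0, \phi\rangle = 0$, and therefore $\supp \hat f_0 \subset \overline{B(0,1)}$.

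The reverse inclusion $\supp \hat f_0 \subset \{|\xi| \geq 1\}$ is obtained symmetrically from the backward recurrence. For $\phi \in C_c^\infty(\mathbb{R}^n)$ supported in $\{|\xi| < 1 - \varepsilon\}$, I write
\begin{equation*}
\langle \hat f_0, \phi\rangle = (-1)^k \langle \hat f_{-k}, |\xi|^{2k}\phi\rangle,
\end{equation*}
and observe that $|\xi|^{2k}\phi$ is smooth, supported in the fixed compact set $\supp \phi$, with derivatives of order $|\alpha|$ bounded by $O(k^{|\alpha|}(1-\varepsilon)^{2k})$; consequently $\|\mathcal{F}(|\xi|^{2k}\phi)\|_1 \to 0$ and the pairing vanishes. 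Combining the two inclusions gives $\supp \hat f_0 \subset \{|\xi| = 1\}$, and the theorem follows.

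The main technical step is the Fourier-side decay estimate for $\|\hat{\psi}_k\|_1$ (and its analogue for $\|\mathcal{F}(|\xi|^{2k}\phi)\|_1$): one must control every relevant Schwartz seminorm of a $k$-dependent cutoff. The crux is the elementary observation that differentiating $|\xi|^{\pm 2k}$ $\ell$ times introduces at most a factor of order $k^\ell$, which is absorbed by any fixed geometric rate $(1\pm\varepsilon)^{\mp 2k}$ coming from the support condition; once this is in hand, the rest is bookkeeping in the distributional localization of $\hat f_0$ onto the unit sphere.
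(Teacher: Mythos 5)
Your Fourier localization of $\supp \hat f_0$ into the unit sphere is correct and is essentially the first step of the argument the paper runs for its tree analogue (the paper does not reprove Theorem~\ref{th2} itself, which it quotes from Strichartz, but its Lemma~A performs exactly your forward/backward multiplier trick with $\gamma(\alpha)^k/\gamma(\cdot)^k$ in place of $|\xi|^{\mp 2k}$). The gap is in your very last sentence of the first paragraph: the implication ``$\supp\hat f_0\subset\{|\xi|=1\}$ hence $-|\xi|^2\hat f_0=-\hat f_0$'' is false for general distributions. A distribution supported on the sphere may carry transversal derivatives: for $T=\partial_r\delta_{S^{n-1}}$ one has $\supp T\subset S^{n-1}$ but $(|\xi|^2-1)T=-2\,\delta_{S^{n-1}}\neq 0$; equivalently, on the physical side, $f(x)=x e^{ix}$ in dimension one satisfies $(d^2/dx^2+1)^2f=0$ and has Fourier support in $\{\pm 1\}$ without being an eigenfunction. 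So support localization alone does not yield the eigenvalue equation, and you must use the uniform bound a second time to kill the transversal part.

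The standard repair, which is what both Strichartz and the paper's Lemma~A do, has two further steps. First, upgrade your support statement to $(|\xi|^2-1)^{N+1}\hat f_0=0$ for some large $N$: test against $(|\xi|^2-1)^{N+1}\phi\,\chi_\epsilon$ where $\chi_\epsilon$ is a cutoff on an $\epsilon$-neighbourhood of the sphere; since $\hat f_0$ has finite order on compact sets (because $|\langle\hat f_0,\phi\rangle|\le M\|\hat\phi\|_1$ is controlled by finitely many sup-norms of derivatives of $\phi$ on a fixed compact set, say $m$ of them), the factor $(|\xi|^2-1)^{N+1}$ contributes $O(\epsilon^{N+1-m})$ while the cutoff contributes $O(\epsilon^{-m})$, and choosing $N$ large makes the pairing vanish as $\epsilon\to 0$ --- this is precisely the paper's estimate (\ref{eq26}) with $N=10m+1$. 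Second, reduce $N+1$ to $1$: writing $g_j=(\Delta+1)^jf_0$ and expanding $\Delta^k=((\Delta+1)-1)^k$ gives $f_k=\sum_{j=0}^{N}\binom{k}{j}(-1)^{k-j}g_j$, and the uniform bound $\|f_k\|_\infty\le M$ against the polynomial growth of $\binom{k}{j}$ forces $g_j=0$ for all $j\ge 1$. Without these two steps the proof is incomplete at exactly the point where the hypothesis of boundedness for \emph{all} $k$ (rather than just the support information) is indispensable.
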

Furthermore, Strichartz also proved that the above result holds for Heisenberg groups but fails for hyperbolic 3-space.
It turns out that the negative result on hyperbolic 3-space can indeed be extended to homogeneous trees ( which can be considered as a discrete version of hyperbolic spaces). A homogeneous tree $\mathfrak{X}$ of degree $q+1$ is a connected graph with no loops such that every vertex is adjacent to $q+1$ other vertices. For details about notation and preliminary results, we refer to section 2. Henceforth we assume $q\geq 2$. The distance $d(x,y)$ between two vertices $x$ and $y$  defined as the number of edges joining $x$ and $y$. The natural Laplace operator (or Laplacian) $\mathcal L$ on $\x$ is defined by
\begin{equation}\label{eq00}\mathcal Lf(x)=f(x)-\frac{1}{q+1}\sum\limits_{y:d(x,y)=1}f(y).\end{equation}

Consider the spherical function $\phi_z$ which is a radial eigenfunction of the Laplacian with eigenvalue $\gamma(z),$ where  $\gamma$ is an analytic function defined by the formula
\begin{equation}\label{eq01}
	\gamma(z)=1-\frac{q^{1/2+iz}+q^{1/2-iz}}{q+1}.
\end{equation}
 Note that the image of $S_1=\{z\in\C: |\Im z|\leq1/2\}$ under the map $\gamma$ is an elliptic region which intersects $\{w\in\mathbb{C}:|w|=1\}$ in infinitely many points. Assume $f_{k}(x)=\gamma(z_{1})^{k}\phi_{z_1}(x)+\gamma(z_{2})^{k}\phi_{z_2}(x)$ for some $z_1,z_2$ in $S_1$ such that $\gamma(z_1)\neq\gamma(z_2)$ and $|\gamma(z_1)|=|\gamma(z_2)|=1$. Since $\phi_z$ is uniformly bounded on $S_1$, $\{f_k\}$ satisfy all the hypothesis of the Theorem \ref{th2}, but  $f_{0}$ fails to be an eigenfunction of $\mathcal{L}$.

A careful analysis of the above counterexample reveals that the failure of Strichartz's result on $\mathfrak{X}$ is mainly due to the spectrum of $\mathcal{L}$.
It was also observed in \cite{KRS2} that failure of Strichartz results is rooted in the $p$-dependence of the $L^p-$spectrum of the Laplacian on the hyperbolic spaces. In \cite{KRS2} it was proved that the theorem indeed remains valid when uniform boundedness is replaced by uniform ``almost $L^p$ boundedness". Here it is worth mentioning that these size estimates arise naturally due to the behaviour of the Poisson transforms, which also acts as the eigenfunctions of $\mathcal{L}$ with the eigenvalues lying in the interior of the ellipse (\ref{equation 1.2}) (For details see \cite{KR}).
The version of Roe's theorem that we have proved in this article in the context of homogeneous trees are the following. A similar result is also proved  for the harmonic $NA$ groups and symmetric spaces (see \cite{KRS2},\cite{RS}).
\begin{theoremA*}\label{Theorem A}
	Let $f$ be a measurable on $\mathfrak{X}$ and $z\in\R\setminus(\tau/2)\Z$. If there exists an $M>0$ such that $\|\mathcal{L}^{k}f\|_{L^{2,\infty}(\mathfrak{X})}\leq M|\gamma(z)|^{k}$ for all $k\in\mathbb{Z}$ then $\mathcal{L}f\equiv\gamma(z)f$. In particular, there exists $F\in L^{2}(\Omega)$ such that $f\equiv\mathcal{P}_{z}F$.
\end{theoremA*}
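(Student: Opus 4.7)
The plan is to analyse the statement via the Helgason--Fourier transform on $\mathfrak X$, which diagonalises $\mathcal L$ through $\widetilde{\mathcal L g}(w,\omega)=\gamma(w)\,\tilde g(w,\omega)$. The two-sided growth hypothesis $\|\mathcal L^{k}f\|_{L^{2,\infty}}\le M|\gamma(z)|^{k}$ for every $k\in\mathbb Z$ should force, after a suitable reduction, the ``spectral support'' of $f$ to live on the level set $\{w\in[-\tau/2,\tau/2]:|\gamma(w)|=|\gamma(z)|\}$. Since $\gamma(w)=1-\tfrac{2\sqrt q}{q+1}\cos(w\log q)$ is strictly monotone on $[0,\tau/2]$ and even, and since by assumption $z\notin(\tau/2)\mathbb Z$, that level set reduces to the pair $\{\pm z\}$ modulo $\tau\mathbb Z$; the symmetry $\phi_{w}=\phi_{-w}$ then collapses both points into a single boundary datum, producing the claimed Poisson representation $f=\mathcal P_{z}F$ and, a fortiori, $\mathcal Lf=\gamma(z)f$.

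The initial obstacle is that $f$ lies only in weak-$L^{2}$, so the standard Plancherel formula is not directly available. I would overcome this by duality: for every $g\in L^{2,1}(\mathfrak X)$ the hypothesis yields
\[
|\langle\mathcal L^{k}f,g\rangle|\le M\,|\gamma(z)|^{k}\,\|g\|_{L^{2,1}}\qquad(k\in\mathbb Z),
\]
while Plancherel applied to $g\in L^{2,1}\subset L^{2}$ rewrites this pairing as a distributional pairing of $\tilde f$ against $\gamma(w)^{k}\,\overline{\tilde g(w,\omega)}\,|c(w)|^{-2}$. Choosing $g$'s whose Helgason--Fourier transforms are concentrated on a frequency window $\{|\gamma(w)|\ge|\gamma(z)|+\varepsilon\}$ and sending $k\to -\infty$, and symmetrically on $\{|\gamma(w)|\le|\gamma(z)|-\varepsilon\}$ with $k\to +\infty$, forces the pairing to vanish there. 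Letting $\varepsilon\downarrow 0$ while varying the radial part of the window yields the desired spectral support statement.

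To close the argument I would invoke the $L^{2,\infty}$--$L^{2}(\Omega)$ correspondence for the Poisson transform available from \cite{KR}: the fact that $\tilde f$ is distributionally concentrated at the two frequencies $\pm z$, combined with the $L^{2,\infty}$ bound on $f$ itself (taking $k=0$), should produce a unique $F\in L^{2}(\Omega)$ with $f=\mathcal P_{z}F$. The eigenfunction identity $\mathcal Lf=\gamma(z)f$ is then automatic since every spherical function $\phi_{z,\omega}$ is a $\gamma(z)$-eigenfunction of $\mathcal L$.

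The principal obstacle I anticipate is making the spectral localisation rigorous in the weak-$L^{2}$ framework, in particular constructing enough frequency-restricted test functions $g\in L^{2,1}$ to sweep out both regimes $|\gamma(w)|>|\gamma(z)|$ and $|\gamma(w)|<|\gamma(z)|$, and then extracting from the resulting distributional information a bona fide $L^{2}(\Omega)$ boundary function rather than a mere distribution on $\Omega$. The linchpin will be the Strichartz/Helgason-type characterisation of $\mathcal P_{z}(L^{2}(\Omega))$ as precisely the class of $L^{2,\infty}$ eigenfunctions of $\mathcal L$ with eigenvalue $\gamma(z)$; the additional $L^{2}(\Omega)$ regularity of the boundary datum must be harvested directly from the $k=0$ bound in the hypothesis via this correspondence.
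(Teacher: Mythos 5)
Your overall strategy---treat $f$ as a tempered distribution via duality with $L^{2,1}$, diagonalise $\mathcal L$ by a Fourier transform, and kill the spectral support off the level set $\{|\gamma(w)|=|\gamma(z)|\}$ by pairing $\mathcal L^{k}f$ against frequency-localised test functions and letting $k\to\pm\infty$---is the same as the paper's (the paper works with the \emph{spherical} transform of the radialised translates $\varepsilon(\tau_{x}T_{k})$ rather than the full Helgason--Fourier transform, which lets it use the isomorphism $\mathcal S_{2}(\mathfrak X)^{\#}\cong\mathcal H(S_{2})^{\#}$ proved in the appendix; if you insist on the non-radial transform you would need the corresponding non-radial Schwartz isomorphism, which the paper deliberately avoids). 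Your identification of the level set with $\{\pm z\}$ mod $\tau\mathbb Z$ is correct, since $\gamma$ is real, positive and strictly monotone on $[0,\tau/2]$ for real argument.

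The genuine gap is in the passage from ``$\widehat{f}$ is supported at $\{\pm z\}$'' to ``$\mathcal Lf=\gamma(z)f$''. A distribution whose spherical transform is supported at two points need not be an eigen-distribution: it can be a generalized eigenfunction of higher order, i.e.\ satisfy $(\mathcal L-\gamma(z))^{N+1}f=0$ with $N\ge1$ (transverse derivatives of the delta at $\pm z$ are not excluded by support considerations alone). Your proposal disposes of this with ``combined with the $L^{2,\infty}$ bound on $f$ itself (taking $k=0$)'', but the $k=0$ bound is not what does the work, and in the non-radial setting it is not even clear it suffices. The paper spends the bulk of its effort exactly here: first a quantitative bump-function estimate (testing against $(\gamma(z)-\gamma(s))^{N+1}\phi\psi_{\epsilon}$ with $\psi_{\epsilon}$ supported in an $\epsilon$-neighbourhood of $\pm z$, and balancing the gain $|\gamma(z)-\gamma(s)|^{10m+2-i}$ against the loss $\epsilon^{-(m-i)}$ from differentiating $\psi_{\epsilon}$) to show $(\mathcal L-\gamma(z))^{N+1}T_{0}=0$ for some finite $N$ depending on the order $m$ of the controlling seminorm; and then the argument from \cite{KRS2}, which expands $\mathcal L^{k}T_{0}=\sum_{j=0}^{N}\binom{k}{j}\gamma(z)^{k-j}(\mathcal L-\gamma(z))^{j}T_{0}$ and uses the \emph{two-sided} bound $\|\mathcal L^{k}f\|_{2,\infty}\le M|\gamma(z)|^{k}$ for all $k$ to rule out the polynomial growth in $k$ that any nonzero term with $j\ge1$ would produce, forcing $N=0$. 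Without both of these steps your argument only localises the spectrum and does not yield the eigenfunction equation; once it is supplied, the final appeal to \cite{KR} for the representation $f=\mathcal P_{z}F$ with $F\in L^{2}(\Omega)$ is exactly what the paper does.
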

Since $\gamma(z)\in \R$ whenever $z\in\R\setminus(\tau/2)\Z$, Theorem A can be thought of as a suitable extension of  Strichartz's result on homogeneous tree.  In particular if we define $f_{k}=\gamma(z)^{-k}\mathcal{L}^{k}f$, then the statement of Theorem A resembles Theorem \ref{th2} with the only difference that the $L^{\infty}$ boundedness is being replaced by weak $L^{2}$ boundedness.
\begin{theoremB*}\label{Theorem B}
	Let $f$ be a measurable on $\mathfrak{X}$ and $1<p<2$.
	\begin{enumerate}
		\item Suppose that $z=n\tau+i\delta_{p'}$ for some $n\in\mathbb{Z}$. If there exists an $M>0$ such that $\|\mathcal{L}^{k}f\|_{L^{p^{\prime},\infty}(\mathfrak{X})}\leq M|\gamma(z)|^{k}$ for all $k\in\mathbb{Z}_{+}$ then $\mathcal{L}f\equiv\gamma(z)f$.
		\item Suppose that $z=(2n+1)\tau/2+i\delta_{p'}$ for some $n\in\mathbb{Z}$. If there exists an $M>0$ such that $\|\mathcal{L}^{-k}f\|_{L^{p^{\prime},\infty}(\mathfrak{X})}\leq M|\gamma(z)|^{-k}$ for all $k\in\mathbb{Z}_{+}$ then $\mathcal{L}f\equiv\gamma(z)f$.
	\end{enumerate}

	In either of these cases, there exists $F\in L^{p^{\prime}}(\Omega)$ such that $f\equiv\mathcal{P}_{z}F$.
\end{theoremB*}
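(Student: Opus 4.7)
The plan is to proceed in two stages: first, to establish the pointwise eigenequation $\mathcal{L} f = \gamma(z) f$; and second, to invoke the Poisson-transform characterisation of weak-$L^{p'}$ eigenfunctions from \cite{KR} in order to produce the boundary datum $F \in L^{p'}(\Omega)$ realising $f \equiv \mathcal{P}_{z} F$.

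For stage one the crucial geometric fact is that the image $E_{p'} := \gamma(\{w : \Im w = \delta_{p'}\})$ of the $p'$-critical line is an ellipse centred at $1$ and disjoint from $0$, on which $|\lambda|$ attains its unique minimum at the case~(1) value $\gamma(z) = \gamma(n\tau + i\delta_{p'})$ and its unique maximum at the case~(2) value $\gamma(z) = \gamma((2n+1)\tau/2 + i\delta_{p'})$. I treat case (1) in detail; case (2) is dual, obtained by replacing $\mathcal{L}$ with $\mathcal{L}^{-1}$, whose $L^{p'}$-spectrum is $1/E_{p'}$ and for which $1/\gamma(z)$ is the unique minimum of $|\lambda|$. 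Choose a radial function $\phi \in L^{p,1}(\mathfrak{X})$ whose spherical Fourier transform $\widetilde{\phi}$ is smooth, compactly supported on the critical line, and vanishing near every point of the orbit $\{z + m\tau : m \in \mathbb{Z}\}$. By Young's inequality in the Lorentz scale,
\[
\|\mathcal{L}^{k}(f \ast \phi)\|_{L^{p'}(\mathfrak{X})} \;\leq\; \|\mathcal{L}^{k} f\|_{L^{p',\infty}(\mathfrak{X})}\,\|\phi\|_{L^{p,1}(\mathfrak{X})} \;\leq\; C M\,|\gamma(z)|^{k}, \qquad k \geq 0.
\]
Since $f \ast \phi \in L^{p'}$ has Helgason--Fourier transform $\widetilde{\phi}(w)\widehat{f}(w,\omega)$, its $\mathcal{L}$-spectral support lies in $\gamma(\mathrm{supp}\,\widetilde{\phi})$, a compact subset of $E_{p'}$ on which $|\lambda|$ is uniformly bounded below by some $r > |\gamma(z)|$; the local spectral radius formula then forces $\limsup_k \|\mathcal{L}^{k}(f \ast \phi)\|_{L^{p'}}^{1/k} \geq r$, contradicting the displayed bound unless $f \ast \phi \equiv 0$. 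Varying $\phi$, the spectral support of $f$ concentrates on the orbit $\{z + m\tau\}$, where $\gamma$ takes the common value $\gamma(z)$, and Helgason--Fourier inversion yields $\mathcal{L} f = \gamma(z) f$ pointwise.

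The main obstacle is the local spectral radius step in the non-Hilbertian space $L^{p'}(\mathfrak{X})$: since $\mathcal{L}$ is not normal there, Gelfand's formula must be supplemented either by a quantitative Paley--Wiener estimate for Helgason--Fourier inversion on $\mathfrak{X}$, or by an analytic-continuation argument for the weak-$L^{p',\infty}$-valued resolvent $\lambda \mapsto R(\lambda) f = -\sum_{k \geq 0} \lambda^{-k-1} \mathcal{L}^{k} f$, extending it from $\{|\lambda| > |\gamma(z)|\}$ across the $L^{p'}$-resolvent set $\mathbb{C} \setminus E_{p'}$ to all of $\mathbb{C} \setminus \{\gamma(z)\}$, where the polynomial bound on the Neumann series forces a simple pole with residue $-f$. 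Either route rests on the detailed $L^p$-spectral picture of $\mathcal{L}$ set up in Section 2 and in \cite{KR,KRS2}; once the eigenequation is secured, stage two follows at once from the Poisson-transform characterisation.
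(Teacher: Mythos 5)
Your overall architecture (localize the spectral support of $f$ near the orbit of $z$, conclude the eigenequation, then invoke \cite{KR} for the Poisson representation) is the right shape, and the final step via \cite{KR} is exactly what the paper does. But stage one contains a fatal gap: the cutoff function $\phi$ you need does not exist. For $1<p<2$, the spherical Fourier transform of any radial function in $L^{p,1}(\mathfrak{X})$ (a fortiori of anything in $\mathcal{S}_p(\mathfrak{X})^{\#}$, against which a weak-$L^{p'}$ function can be paired) extends holomorphically to the interior of the strip $S_p$ and continuously to its closure; this is the content of the isomorphism Theorem \ref{lp isom}. A function holomorphic on $S_p^{\circ}$, continuous up to the boundary, whose boundary values vanish on a nonempty open subset of the critical line $\{\Im w=\delta_{p}\}$ is identically zero. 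Hence there is no nonzero $\phi\in L^{p,1}(\mathfrak{X})$ with $\widetilde{\phi}$ ``compactly supported on the critical line and vanishing near the orbit of $z$'', and the convolution/Young's-inequality step collapses. This is precisely the obstruction the paper flags before Lemma B1: the technique of Lemma A (test functions with Fourier support bounded away from the critical point), which works for $p=2$ because $S_2=\mathbb{R}$ carries no holomorphy constraint, is unavailable for $p<2$.

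The paper's substitute is quantitative rather than support-theoretic: writing $T_k=\gamma(z)^{-k}\mathcal{L}^k f$, one shows $(\gamma(z_0)-\gamma(\cdot))^{N+1}\widehat{T_0}=0$ by estimating $\sup_{S_p^{+}}\bigl|\tfrac{d^m}{dz^m}\bigl((\gamma(z_0)/\gamma(z))^{k}(\gamma(z_0)-\gamma(z))^{N+1}\phi(z)\bigr)\bigr|$ directly on the whole strip, splitting $S_p^{+}$ into a shrinking window $V_k$ around the critical point (where the factor $(\gamma(z_0)-\gamma(z))^{N+1}$ supplies decay of order $k^{-(N+1-m)/6}$) and its complement (where $|\gamma(z_0)/\gamma(z)|^{k}\leq(1+c k^{-1/2})^{-k}$ supplies decay), with $N$ chosen large relative to the order $m$ of the seminorm. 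You would need to replace your cutoff-plus-spectral-radius step with an argument of this balanced type; as written, your acknowledged ``main obstacle'' (justifying the local spectral radius formula on the non-Hilbertian $L^{p'}$) is actually the lesser of the two problems. You also do not address the reduction from general to radial distributions (translation and radialization), which the paper needs because the spherical isomorphism theorem only applies to radial objects.
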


It was proved in \cite{KR} that any weak $L^p$ eigenfunction of the Laplacian of $\mathfrak{X}$ can be represented by the Poisson transform of a $L^{p}$ function on the boundary.
Therefore the conclusion of these theorems is more precise than that of Theorem \ref{th2}.

 We conclude this section by summarizing the contents of this article. In Section 2 we discuss some basic notation, definition, and a few well-known results on $\mathfrak{X}$. In Section 3  we shall provide a detailed proof of the Theorem A and Theorem B. In the last section we shall discuss the sharpness of our main results and provide an outline of the Roe's result separately on $\mathbb{Z}$ (i.e., the case when $q=1$). To make this article self-contained, we have also included a small appendix about the isomorphism theorem at the end of the paper.

%%%%%%%%%%%%%%%%%%%%%%%%%%%%%%%%%%%%%%%%%%%%%%%%%%%%%%%%%%%%%%%%%%%%%%%%%%%%

\section{Preliminaries}\label{Section 2}

For notation and some preliminary result about homogeneous tree and their group of isometries, we will mainly follow \cite{JF,CMS,CMS1,FTC,FTP2}. Most of our other notation are standard. The letters $\mathbb{Z}_{+},\mathbb{Z},\mathbb{R}$ and $\mathbb{C}$ will respectively denote the set of all non-negative integers, integers, real numbers and complex numbers. For $z\in \C$ we use the notation $\Re z$ and $\Im z$ for real and imaginary part of $z$ respectively. We also need some basic facts about the Lorentz spaces that can be found in \cite{LG1}.

 Let $G$ be the group of isometries of the metric space $(\x,d)$ and let $K$ be the stabilizer of $o$ in $G$. The map $g\rightarrow g\cdot o$ identifies $\x$ with the coset space $G/K$, so that functions on $\x$ corresponds to $K$-right invariant functions on $G$.  Further radial functions on $\x$ corresponds to $K$-bi-invariant functions on $G$. If $E(\x)$ is a function space on $\x$  we will denote by $E(\x)^\#$  the radial functions in $E(\x)$.
An infinite geodesic ray $\omega$ in $\x$ is an one-sided sequence $\{\omega_{n}: n=0,1,2\ldots \}$ where $\omega_n$'s are in $\x$. Let $o$ be some arbitrarily fixed point in $\x$. The boundary of $\x$ is the set of all infinite geodesic rays starting at $o$ and will be denoted by $\Omega$.  Notice that the map $k\rightarrow k\cdot\omega_{0}$ represents a transitive action of $K$ on $\Omega$. Let $\nu$ be the $G$-quasi-invariant probability measure on the boundary $\Omega$. The Poisson kernel $p(g\cdot o,\omega)$ is the Radon-Nikodym derivative $d\nu(g^{-1}\omega)/d\nu(\omega)$ and explicitly written as
$$ p(x,\omega)=q^{h_{\omega}(x)} \;\; \forall x\in \x \;\; \forall \omega\in\Omega,$$
where $h_{\omega}(x)$ is the height of $x$ in $\x$ with respect to $\omega$ (see \cite{FTC} for details). The Poisson transformation $\mathcal{P}_{z}:C(\Omega)\rightarrow C(\x)$ is given by the formula
$$\mathcal{P}_{z}\eta(x)=\int\limits_{\Omega}p^{1/2+iz}(x,\omega)\eta(\omega)d\nu(\omega).$$
It is obvious that $\mathcal{P}_{z}=\mathcal{P}_{z+\tau}$, where $\tau=2\pi/\log q$. We denote the torus $\R/\tau \Z$ by $\mathbb{T}$, which can be identified with the interval $[-\tau/2,\tau/2)$. Let $\mathcal L$ be the Laplacian on $\mathfrak X$ defined in (\ref{eq00}). It is a well-known fact that $\mathcal L \mathcal{P}_{z}\eta(x) =\gamma(z) \mathcal{P}_{z}\eta(x)$ for every $z$ in $\mathbb{C},$ where expression of $\gamma(z)$ is given in (\ref{eq01}). The elementary spherical function $\phi_z$ on $\mathfrak X$ is the Poisson transform of the constant function $\mathbf{1}$.
Note that $\phi_z$ is the radial eigenfunction of $\mathcal L$ with eigenvalue $\gamma(z)$ such that  $\phi_z(o)=1$.
 For a suitable function $f$ on $\mathfrak{X}$, its radialization $\varepsilon f$ is defined as
\begin{equation}\label{Equation 2.3}
	\varepsilon f(x)=\int\limits_{K}f(k\cdot x)dk,
\end{equation}
where $dk$ is the normalized measure on $K$. Some useful facts about radialization:
 \begin{enumerate}
	\item $\|\varepsilon f\|_{p,q}\leq\|f\|_{p,q}$ whenever $1<p<\infty$, $1\leq q\leq\infty$.
\item The operator  $\varepsilon$  commutes with the Laplacian $\mathcal{L}$, that is $\mathcal{L}(\varepsilon f)=\varepsilon(\mathcal{L}f).$
\item Also if $\mathcal L u=\gamma(z)u$ then $\varepsilon u (x)=u(o)\phi_z(x)$.
\end{enumerate}
The following expression of the function $\phi_z$ is well-known (see \cite{FTP1})
\begin{equation}\label{eqsf}
\phi_z(x)= \begin{cases}
\vspace*{.2cm} \left(\frac{q-1}{q+1}|x|+1\right)q^{-|x|/2}&\forall z\in\ \tau\Z\\
\vspace*{.2cm}\left(\frac{q-1}{q+1}|x|+1\right)q^{-|x|/2}(-1)^{|x|}&\forall z\in {\tau/2}+\tau\Z\\
\mathbf{c}(z)q^{{(iz-1/2)}|x|}+\mathbf{c}(-z)q^{{(-iz-1/2)}|x|}&\forall z\in\C\setminus(\tau/2)\Z,
\end{cases}
\end{equation}
where $\mathbf{c}$ is a meromorphic function given by
$$\mathbf{c}(z)=\frac{q^{1/2}}{q+1}\frac{q^{1/2+iz}-q^{-{1/2}-iz}}{q^{iz}-q^{-iz}}\quad\forall z\in (\tau/2)\Z.$$
It is easy to verify that $|\phi_{z}(x)|\leq 1$ for all $x\in\x$ whenever $z\in S_1=\{z\in\C: |\Im z|\leq1/2\}$.
Now we give some $L^p-$type estimates of $\phi_{z}(x)$. For $p\in(1,\infty)$ we define
$$\delta_p=\frac{1}{p}-\frac{1}{2}\;\;\; \text{and}\;\;\; S_p=\{z\in\C: |\Im z|\leq|\delta_p|\}.$$
It is important to note that $\delta_p=-\delta_{p^{\prime}}$ and $S_{2}=\mathbb{R}$. We assume $\delta_1=-\delta_\infty=1/2$ so that $S_1=\{z\in\C: |\Im z|\leq1/2\}$. We shall henceforth write $S_p^\circ$ and $\partial{S_p}$ to denote the usual interior and boundary of $S_p$ respectively.  The following norm estimates of $\phi_{z}$ can be derived by using  (\ref{eqsf}) (see \cite{FTP1} for details).
\begin{Lemma}\label{phiz}
	Let $1<p<2$. Then
	\begin{enumerate}
		\item[$1.$] $\phi_z\in L^{p'}(\x)\quad\text{if and only if}\quad z\in S_p^\circ$
		\item[$2.$] $\phi_z\in L^{p',\infty}(\x)\quad\text{if and only if}\quad z\in S_p.$
		\item[$3.$] $\phi_z\notin L^{2,\infty}(\x)$ if $z\in(\tau/2)\Z$ and $\phi_z\in L^{2,\infty}(\x)$ if $z\in\R\setminus(\tau/2)\Z$.
	\end{enumerate}
\end{Lemma}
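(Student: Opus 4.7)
The plan is to reduce each norm of the radial function $\phi_z$ to an explicit geometric sum, using that $\{x \in \mathfrak{X} : |x|=n\}$ has cardinality $(q+1)q^{n-1}$ for $n \geq 1$, and then to analyze the three branches of (\ref{eqsf}) separately.

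For $z \in \mathbb{C}\setminus(\tau/2)\mathbb{Z}$, set $\alpha := 1/2 - |\Im z|$. The triangle inequality applied to the third branch of (\ref{eqsf}) gives $|\phi_z(x)| \leq C q^{-\alpha|x|}$, and the matching lower bound $|\phi_z(x)| \geq c q^{-\alpha|x|}$ for large $|x|$ holds as long as $\Im z \neq 0$, because one of the two exponentials then strictly dominates the other in modulus. Summing over spheres,
\begin{equation*}
\|\phi_z\|_{p'}^{p'} \asymp \sum_{n \geq 0} q^n \cdot q^{-p'\alpha n},
\end{equation*}
which converges iff $p'\alpha > 1$, i.e.\ $|\Im z| < 1/2 - 1/p' = \delta_p$. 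For the weak norm, solving $q^{-\alpha n} \asymp t$ gives $|\{x : |\phi_z(x)| > t\}| \asymp t^{-1/\alpha}$, so that $\sup_t t^{p'-1/\alpha}$ is finite iff $\alpha \geq 1/p'$, i.e.\ $z \in S_p$. This proves (1) and (2) for $\Im z \neq 0$; for real $z \in \mathbb{R}\setminus(\tau/2)\mathbb{Z}$ one has $\alpha = 1/2$, so the upper bound alone already places $\phi_z \in L^{p'}\cap L^{p',\infty}$ for every $p < 2$, consistent with (1) and (2).

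For the exceptional set $z \in (\tau/2)\mathbb{Z}$, the first two branches of (\ref{eqsf}) give $|\phi_z(x)| = \bigl(\tfrac{q-1}{q+1}|x|+1\bigr)q^{-|x|/2}$, so $|\phi_z(x)| \asymp |x|\,q^{-|x|/2}$ from both sides. In $L^{p'}$ with $p'>2$ the linear factor is harmless: the series $\sum_n n^{p'} q^{n(1-p'/2)}$ converges, so $\phi_z \in L^{p'}$, consistent with $(\tau/2)\mathbb{Z} \subset S_p^\circ$. For the weak norm, inverting $nq^{-n/2} \asymp t$ at leading order yields $n \asymp 2\log_q(1/t)$, and carrying the correction one further step gives
\begin{equation*}
|\{x : |\phi_z(x)| > t\}| \asymp t^{-2}\bigl(\log(1/t)\bigr)^2.
\end{equation*}
Thus $t^{p'}|\{|\phi_z|>t\}| \asymp t^{p'-2}(\log(1/t))^2$, which tends to $0$ when $p'>2$ (giving (2) at the exceptional points) but diverges when $p'=2$, proving $\phi_z \notin L^{2,\infty}$ and hence the negative direction of (3). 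The positive direction of (3) for $z \in \mathbb{R}\setminus(\tau/2)\mathbb{Z}$ is the boundary case $\alpha=1/2$, $p'=2$ of the previous paragraph.

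The main technical point I anticipate is the logarithmic correction at the exceptional points: inverting the transcendental relation $nq^{-n/2} = t$ requires iterating the approximation $n = 2\log_q(1/t) + (2/\log q)\log n + O(1)$ carefully enough to capture the extra $(\log(1/t))^2$ factor that is ultimately responsible for $\phi_z$ leaving $L^{2,\infty}$; everything else reduces to geometric-series bookkeeping.
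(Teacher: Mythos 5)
Your computation is correct and is exactly the derivation the paper has in mind: the lemma is stated there without proof, with only the remark that it ``can be derived by using (\ref{eqsf})'' and a citation to Fig\`a-Talamanca--Picardello, and your sphere-by-sphere summation of the explicit formula --- including the logarithmic correction $|\{|\phi_z|>t\}|\asymp t^{-2}(\log(1/t))^2$ that excludes $\phi_z$ from $L^{2,\infty}$ at $z\in(\tau/2)\Z$ --- is the standard way to carry this out. The one point worth making explicit is that your lower bound $|\phi_z(x)|\geq c\,q^{-\alpha|x|}$ needs the coefficient of the dominant exponential to be nonzero, which does hold since $\mathbf{c}(\mp z)$ vanishes only when $\Im z=\pm1/2$, i.e.\ never for the term that dominates.
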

From the above lemma it is clear that $\gamma(S_p^\circ)$ lies in the point spectrum of the $\mathcal L,$ which is a bounded operator on $L^{p^\prime}(\mathfrak X)$.
The following observation given in \cite[page 4275]{CMS1} clarifies the spectrum $\sigma_{p}(\mathcal{L})$ of the Laplacian $\mathcal{L}$.

\noindent{\it	For every $p\in[1,\infty]$, the $L^{p}$-spectrum $\sigma_{p}(\mathcal{L})$ of $\mathcal{L}$ is the image of $S_p$ under the map $\gamma$, which is precisely the set of all $w$ in $\mathbb{C}$ which satisfies
	\begin{equation}\label{equation 1.2}
	\left[\frac{1-\Re(w)}{b\cosh(\delta_{p}\log q)}\right]^{2}+\left[\frac{\Im(w)}{b\sinh(\delta_{p}\log q)}\right]^{2}\leq 1,\text{ where }b=\frac{2\sqrt{q}}{q+1}.
	\end{equation}
	In particular, $\sigma_{2}(\mathcal{L})$ degenerates into the line segment $[1-b,1+b]$.}

The spherical Fourier transform $\hat{f}$ of a finitely supported radial function $f$  is defined by
\begin{equation}\label{Equation 2.6}
	\hat{f}(z)=\sum\limits_{x\in\x}f(x)\phi_z(x)\quad\text{ where }z\in\C.
\end{equation}
The symmetric properties of the spherical function implies that $\hat{f}$ is even and $\tau$-periodic on $\mathbb{C}$. For $1<p\leq 2$, we define the space  $\mathcal{S}_p(\mathfrak{X})$ which consists of all those functions $f$ defined on $\mathfrak{X}$ such that
\begin{equation}\label{Equation 2.7}
\nu_{m}(f)=\sup\limits_{x\in\mathfrak{X}}(1+|x|)^{m}q^{|x|/p}|f(x)|<\infty,\text{ for all }m\in\mathbb{N}.
\end{equation}
It is known that $\mathcal{S}_p(\mathfrak{X})$ form Fr\'{e}chet space with respect to these countable seminorms $\nu_{m}(\cdot)$ and are also known as the $p$-Schwartz spaces of rapidly decreasing functions on $\mathfrak{X}$ (see \cite{CMS}). For $1<p\leq 2$, we also define the space $\mathcal{H}(S_{p})^{\#}$ of all even, $\tau$-periodic function $g$ on $S_{p}$ which are holomorphic on $S_{p}^{\circ}$, continuous on $\partial S_{p}$ and satisfies
\begin{equation}\label{Equation 2.8}
\mu_{m}(g)=\sup\limits_{z\in S_{p}}\left|\frac{d^m}{dz}g(z)\right|<\infty,\text{ for all }m\in\mathbb{N}.
\end{equation}
It was  proved in \cite{JF} that the spherical Fourier transform is a topological isomorphism from $\mathcal{S}_2(\mathfrak{X})^{\#}$ onto $\mathcal{H}(S_{2})^{\#}$. In fact a similar result also holds when we consider $1<p<2$. The proof is given in appendix.

 For $1<p\leq 2$, a linear functional $T:\mathcal{S}_p(\mathfrak{X})\rightarrow\mathbb{C}$ is said to be a $L^{p}$-tempered distribution if $\langle T,f_{n}\rangle\rightarrow 0$ whenever $\nu_{m}(f_{n})\rightarrow 0$ for all $m\in\mathbb{N}$. The distribution $T$ is said to be radial if
$$\langle T,f\rangle=\langle T,\varepsilon f\rangle,\text{ for all }f\in\mathcal{S}_p(\mathfrak{X}).$$
In fact the radial part of an $L^{p}$-tempered distribution $T$ is again an $L^{p}$-tempered distribution defined by
$$\langle\varepsilon T,f\rangle=\langle T,\varepsilon f\rangle,\text{ for all }f\in\mathcal{S}_p(\mathfrak{X}).$$
The left translation $\tau_{x}$ of $T$ by an element $x\in G$ is defined as follow if $f\in\mathcal{S}_p(\mathfrak{X})$ then
$$\langle \tau_{x}T,f\rangle=T(\tau_{x^{-1}}f)=T\ast f^{\ast}(x^{-1})$$
where $f^{\ast}(x)=f(x^{-1})$.
Finally, the spherical Fourier transform $\hat{T}$ of a radial $L^{p}$-tempered distribution $T$ is a linear functional on $\mathcal{H}(S_{p})^{\#}$ defined by the following rule:
$$\langle \hat{T},\psi\rangle=\langle T,f\rangle,\text{ where }\psi\in\mathcal{H}(S_{p})^{\#}\text{ and }\hat{f}=\psi.$$
%%%%%%%%%%%%%%%%%%%%%%%%%%%%%%%%%%%%%%%%%%%%%%%%%%%%%%%%%%%%%%%%%%%%%%%%%%%%

\section{Proof of Theorem A and theorem B}\label{Section 3}
Our approach is motivated by proof given in \cite{KRS2}, which in turn is influenced by Strichartz's approach. In both the papers, the Fourier transform of a tempered distribution played an important role. Proof of Theorem A and Theorem B is an immediate consequence of following three key results, namely, Lemma A, Lemma B1, and Lemma B2.
\begin{lemmaA*}\label{Theorem10}
Let $\{T_{k}\}_{k\in\mathbb{Z}}$ be a doubly infinite sequence of $L^2$-tempered distributions on $\mathfrak{X}$ satisfying,
\begin{enumerate}
\item $\mathcal{L}T_{k}=z_{0}T_{k+1}$ for some non-zero $z_{0}\in\mathbb{C}$ and
\item $|\langle T_k,\phi\rangle|\leq M\nu(\phi)$ for all $\phi\in\mathcal{S}_2(\mathfrak{X})$, where $\nu$ is some fixed semi-norm of $\mathcal{S}_2(\mathfrak{X})$ and $M>0$.
\end{enumerate}
Then we have the following results.
\begin{enumerate}
\item[(a)] If $|z_{0}|\in [1-b,1+b]$, then $\mathcal{L}T_{0}=|z_{0}|T_{0}$ and
\item[(b)] If $|z_{0}|\notin [1-b,1+b]$, then $T_k=0$ for all $k\in\mathbb{Z}$,
\end{enumerate}
where $b=\frac{2\sqrt{q}}{q+1}$.
\end{lemmaA*}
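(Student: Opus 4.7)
The plan is to transport the problem to the Fourier side, where the recursion $\mathcal{L}T_{k}=z_{0}T_{k+1}$ becomes multiplication by $\gamma(z)/z_{0}$, and then to localize the spherical Fourier transform of $T_{0}$ using the uniform distributional bound. Iterating the recursion gives $T_{k}=z_{0}^{-k}\mathcal{L}^{k}T_{0}$ for $k\ge 0$ and $\mathcal{L}^{k}T_{-k}=z_{0}^{k}T_{0}$ for $k\ge 0$. Since the radialization $\varepsilon$ commutes with $\mathcal{L}$ and $\nu(\varepsilon\phi)\le\nu(\phi)$ for the natural seminorms on $\mathcal{S}_{2}(\mathfrak{X})$, the sequence $\{\varepsilon T_{k}\}$ is itself a doubly infinite sequence of radial $L^{2}$-tempered distributions satisfying the same hypotheses.

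Applying the spherical Fourier transform, which by Section~2 is a topological isomorphism $\mathcal{S}_{2}(\mathfrak{X})^{\#}\to\mathcal{H}(S_{2})^{\#}$, the recursion becomes
\[
\widehat{\varepsilon T_{k}}=\bigl(\gamma(z)/z_{0}\bigr)^{k}\,\widehat{\varepsilon T_{0}}
\]
in the dual $\mathcal{H}(S_{2})^{\#\prime}$, together with a uniform bound $|\langle\widehat{\varepsilon T_{k}},\psi\rangle|\le M\mu(\psi)$ in some seminorm $\mu$. The heart of the proof is a support argument: if the support of $\widehat{\varepsilon T_{0}}$ met an open $U\subset\mathbb{T}$ on which $|\gamma(z)|>|z_{0}|$, I would pick $\psi$ supported in $U$ with $\langle\widehat{\varepsilon T_{0}},\psi\rangle\neq 0$, set $\psi_{k}=(z_{0}/\gamma)^{k}\psi$, and obtain
\[
|\langle\widehat{\varepsilon T_{0}},\psi\rangle|=|\langle\widehat{\varepsilon T_{k}},\psi_{k}\rangle|\le M\mu(\psi_{k}),
\]
which tends to $0$ as $k\to+\infty$ because $|z_{0}/\gamma|<1$ uniformly on $\overline{U}$, a contradiction. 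A symmetric argument using $k\to-\infty$ rules out $\{|\gamma|<|z_{0}|\}$, so the support of $\widehat{\varepsilon T_{0}}$ is confined to $\{z\in\mathbb{T}:|\gamma(z)|=|z_{0}|\}$.

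Since $\gamma(z)=1-b\cos(z\log q)$ is positive and takes values in $[1-b,1+b]$ for real $z$, in case (b) this set is empty, forcing $\varepsilon T_{0}=0$. In case (a) the support is the finite set $\{\gamma(z)=|z_{0}|\}$, and reading the uniform bound order by order at each support point (via Taylor expansion of $(\gamma/z_{0})^{k}$) restricts $\widehat{\varepsilon T_{0}}$ to Dirac masses compatible with the order of vanishing of $\gamma-|z_{0}|$, yielding $(\gamma-|z_{0}|)\widehat{\varepsilon T_{0}}=0$, i.e.\ $\mathcal{L}(\varepsilon T_{0})=|z_{0}|\varepsilon T_{0}$.

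To upgrade from the radial part to $T_{0}$ itself, I apply the entire argument to each translated sequence $\{\tau_{g}T_{k}\}_{k\in\mathbb{Z}}$, $g\in G$, which satisfies the hypotheses with a $g$-dependent seminorm. The resulting identities $\mathcal{L}(\varepsilon\tau_{g}T_{0})=|z_{0}|\varepsilon\tau_{g}T_{0}$ (respectively $\varepsilon\tau_{g}T_{0}=0$), unwound via $\langle\varepsilon\tau_{g}T_{0},\phi\rangle=\langle T_{0},\tau_{g^{-1}}\varepsilon\phi\rangle$, hold for all $g\in G$ and $\phi\in\mathcal{S}_{2}(\mathfrak{X})$; the density of $\{\tau_{g^{-1}}\varepsilon\phi:g\in G,\ \phi\in\mathcal{S}_{2}(\mathfrak{X})\}$ in $\mathcal{S}_{2}(\mathfrak{X})$ then forces $\mathcal{L}T_{0}=|z_{0}|T_{0}$ in case (a) and $T_{0}=0$ in case (b). In the latter case, the vanishing of the remaining $T_{k}$ follows by iteration for $k\ge 0$ and by applying the same argument to the shifted sequence $\{T_{k+k_{0}}\}_{k\in\mathbb{Z}}$ for any $k_{0}<0$. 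The main obstacles are making the distributional support and order analysis rigorous on the finite set $\{\gamma(z)=|z_{0}|\}$, and justifying the density of translated radial Schwartz functions needed to pass from radial identities back to the full distribution $T_{0}$.
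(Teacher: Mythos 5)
Your proposal is correct and follows essentially the same route as the paper: radialize, pass to the spherical Fourier transform, localize $\supp\widehat{\varepsilon T_{0}}$ to $\{|\gamma|=|z_{0}|\}$ by letting $k\to\pm\infty$, analyze the point masses there, and recover the non-radial statement via translated radializations (the paper uses the pairing with $\delta_{o}$, equivalent to your density claim). The only step you leave schematic --- the ``order by order'' elimination of higher derivatives of Dirac masses at $\pm\alpha$ --- is precisely where the paper works hardest, first proving $(\gamma(\alpha)-\gamma)^{N+1}\widehat{T_{0}}=0$ via a cutoff $\psi_{\epsilon}$ and then invoking \cite{KRS2} to reduce to first order, so the same idea with the details filled in.
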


\begin{proof} We first prove part (a) of the theorem with an additional assumption that the distributions $T_{k}$ are radial.
Fix $z_{0}\in\mathbb{C}$ such that $|z_{0}|\in[1-b,1+b]$. Then $z_{0}=\gamma(\alpha)e^{i\theta}$ for a unique $\alpha\in[0,\tau/2]$ where $\theta=\arg z_{0}$.
It follows from hypothesis (1) of the theorem that $\mathcal{L}^{k}T_{0}=e^{ik\theta}\gamma(\alpha)^{k}T_{k}$ for every $k\in\mathbb{Z}$. This implies
\begin{equation}\label{eq23}
	\widehat{T_{0}}=e^{ik\theta}\left(\frac{\gamma(\alpha)}{\gamma(\cdot)}\right)^{k}\widehat{T_{k}}.
\end{equation}
 Let $\phi\in\mathcal{H}(S_{2})^{\#}$ be such that $\supp(\phi)\subseteq[-\tau/2,-\alpha-r]\cup[\alpha+r,\tau/2]$ where $r>0$. Observing the fact that $\gamma(\alpha)^{k}/\gamma(\cdot)^{k}\phi\in\mathcal{H}(S_{2})^{\#}$ and using hypothesis (2) of the theorem, we have

$$|\langle\widehat{T_{0}},\phi\rangle|=\left|\left\langle\widehat{T_{k}},e^{ik\theta}\left(\frac{\gamma(\alpha)}{\gamma(\cdot)}\right)^{k}\phi\right\rangle\right|=\left|\left\langle T_{k},\left(\left(\frac{\gamma(\alpha)}{\gamma(\cdot)}\right)^{k}\phi\right)^{\vee}\right\rangle\right|
	\leq M\nu\left[\left(\left(\frac{\gamma(\alpha)}{\gamma(\cdot)}\right)^{k}\phi\right)^{\vee}\right].$$

By the isomorphism theorem \ref{lp isom}, there exists  a fixed seminorm $\mu$ on $\mathcal{H}(S_{2})^{\#}$ such that
$$\nu\left[\left(\left(\frac{\gamma(\alpha)}{\gamma(\cdot)}\right)^{k}\phi\right)^{\vee}\right]
	\leq C\mu\left[\left(\frac{\gamma(\alpha)}{\gamma(\cdot)}\right)^{k}\phi\right]=\sup\limits_{\alpha+r\leq|s|\leq\tau/2}\left|\frac{d^{m}}{ds^{m}}\left(\left(\frac{\gamma(\alpha)}{\gamma(s)}\right)^{k}\phi\right)\right|\rightarrow 0\text{ as }k\rightarrow\infty.$$
 By similar argument as above and letting $k\rightarrow -\infty$, we can show that $\langle\widehat{T_{0}},\phi\rangle=0$ for every $\phi\in\mathcal{H}(S_{2})^{\#}$ with $\supp(\phi)\subseteq[-\alpha+r,\alpha-r]$. We proved that for any $r>0$ and for every $\phi\in\mathcal{H}(S_{2})^{\#}$ such that $\supp(\phi)\subseteq[-\tau/2,-\alpha-r]\cup[-\alpha+r,\alpha-r]\cup[\alpha+r,\tau/2]$, $\langle\widehat{T_{0}},\phi\rangle=0$.

We now show that
\begin{equation}\label{eq24}
	(\mathcal{L}-\gamma(\alpha))^{N+1}T_{0}=0\quad\text{for some }N\in\mathbb{Z}_{+}
\end{equation}
In view of the fact that the spherical transform is an isomorphism from $\mathcal{S}_2(\mathfrak{X})^{\#}$ onto $\mathcal{H}(S_{2})^{\#}$, it is enough to prove that
\begin{equation}\label{eq25}
	(\gamma(\alpha)-\gamma(s))^{N+1}\widehat{T_{0}}=0\quad\text{for some }N\in\mathbb{Z}_{+}.
\end{equation}

Let $g$ be an infinitely differentiable even function on $\mathbb{R}$ such that $g\equiv 1$ on $[-1/2,1/2]$ and $\supp(g)\subseteq(-1,1)$. Define
$$\psi_{\epsilon}(s)=\begin{cases}
g((s-\alpha)/\epsilon) & s\in[0,\tau/2]\\
g((-s-\alpha)/\epsilon) & s\in[-\tau/2,0].
\end{cases}
$$
Here $\epsilon$ is suitably chosen positive number such that $\psi_{\epsilon}\in\mathcal{H}(S_{2})^{\#}$ with $\supp(\psi_{\epsilon})\subseteq(-\alpha-\epsilon,-\alpha+\epsilon)\cup(\alpha-\epsilon,\alpha+\epsilon)$.

Note that if $\phi\in\mathcal{H}(S_{2})^{\#}$, then $(\gamma(\alpha)-\gamma(\cdot))^{N+1}\phi(1-\psi_{\epsilon})\in\mathcal{H}(S_{2})^{\#}$ with its support inside $[-\tau/2,-\alpha-\epsilon/2]\cup[-\alpha+\epsilon/2,\alpha-\epsilon/2]\cup[\alpha+\epsilon/2,\tau/2]$. and using the result proved in step 1, we have
\begin{align}
	|\langle(\gamma(\alpha)-\gamma(s))^{N+1}\widehat{T_{0}},\phi\rangle|	 &\leq|\langle\widehat{T_{0}},(\gamma(\alpha)-\gamma(s))^{N+1}\phi(1-\psi_{\epsilon})\rangle|+|\langle\widehat{T_{0}},(\gamma(\alpha)-\gamma(s))^{N+1}\phi\psi_{\epsilon}\rangle|\nonumber\\
	&=|\langle\widehat{T_{0}},(\gamma(\alpha)-\gamma(s))^{N+1}\phi\psi_{\epsilon}\rangle|\nonumber\\
	&\leq M\nu\left[\left((\gamma(\alpha)-\gamma(s))^{N+1}\phi\psi_{\epsilon}\right)^{\vee}\right]\nonumber\\
&\leq M\mu\left[(\gamma(\alpha)-\gamma(s))^{N+1}\phi\psi_{\epsilon}\right]\nonumber\\
&=M\sup\limits_{s\in[-\tau/2,\tau/2]}\left|\frac{d^{m}}{ds^{m}}\left((\gamma(\alpha)-\gamma(s))^{N+1}\phi(s)\psi_{\epsilon}(s)\right)\right|\nonumber\\
	&=M\sup\limits_{\alpha-\epsilon\leq |s|\leq\alpha+\epsilon}\left|\frac{d^{m}}{ds^{m}}\left((\gamma(\alpha)-\gamma(s))^{N+1}\phi(s)\psi_{\epsilon}(s)\right)\right|\nonumber\\
	&\leq M\sum\limits_{i=0}^{m}\binom{m}{i}\sup\limits_{\alpha-\epsilon\leq |s|\leq\alpha+\epsilon}\left|\frac{d^{i}}{ds^{i}}\left((\gamma(\alpha)-\gamma(s))^{N+1}\right)\right|\nonumber\\
	&\hspace*{5cm}\times\sup\limits_{\alpha-\epsilon\leq |s|\leq\alpha+\epsilon}\left|\frac{d^{m-i}}{ds^{m-i}}\left(\phi(s)\psi_{\epsilon}(s)\right)\right|.\label{eq26}
\end{align}
Choose $N$ large enough e.g. $N=10m+1$. Then for every $s\in(\alpha-\epsilon,\alpha+\epsilon)$ we have the following estimates:
\begin{enumerate}
	\item[(i)] $\left|\frac{d^{i}}{ds^{i}}\left((\gamma(\alpha)-\gamma(s))^{10m+2}\right)\right|\leq B_{m}|\gamma(\alpha)-\gamma(s)|^{10m+2-i}$ where $0\leq i\leq m$ and
	\item [(ii)] $\left|\frac{d^{m-i}}{ds^{m-i}}\left(\phi(s)\psi_{\epsilon}(s)\right)\right|\leq C_{m,\phi}/\epsilon^{m-i}.$
\end{enumerate}
The above estimates together with (\ref{eq26}) implies that
\begin{align*}
	\mu\left[(\gamma(\alpha)-\gamma(s))^{N+1}\phi\psi_{\epsilon}\right]&\leq M_{m,\phi}\sum\limits_{i=0}^{m}\sup\limits_{\alpha-\epsilon\leq |s|\leq\alpha+\epsilon}|\gamma(\alpha)-\gamma(s)|^{10m+2-i}\frac{1}{\epsilon^{m-i}}\\
	&\leq D\epsilon^{9m+2}\rightarrow 0\text{ as }\epsilon\rightarrow 0.
\end{align*}
This proves (\ref{eq25}).
Using the same argument given in \cite{KRS2} one can easily prove that  $$(\mathcal{L}-\gamma(\alpha))T_{0}=0.$$
This prove part (a) for radial distribution.
Now we shall prove the result for general case. To avoid triviality, we further assume that $T_{k}$ is nonzero for some (and hence for all) $k\in\mathbb{Z}$.

Observe that for any $L^{2}$-tempered distribution $T$, if $\varepsilon(\tau_{x}T)=0$ for every $x\in\mathfrak{X}$ then $T=0$. Indeed the above assumption on $T$ implies that $$\langle\tau_{x}T,\delta_{0}\rangle=T\ast\delta_{0}(x^{-1})=0$$ for all $x\in\mathfrak{X}$, where $\delta_{0}$ denotes the Dirac-Delta function at $o$. Since $T\ast\delta_{0}=T$ in the sense of distribution thus $T=0$.
This shows that for every $k\in\mathbb{Z}$, there exists an $x\in\mathfrak{X}$ such that the distribution $\tau_{x}T_{k}$ has a nonzero radial part.

Now we claim that if $\varepsilon(\tau_{x}T_{0})\neq 0$ for some $x\in\mathfrak{X}$, then $\varepsilon(\tau_{x}T_{k})\neq 0$ for every $k\in\mathbb{Z}$. To prove this it is enough to show that if $\varepsilon(\tau_{x}T_{0})\neq 0$ for some $x\in\mathfrak{X}$, then $\varepsilon(\tau_{x}T_{-1})\neq 0$ and $\varepsilon(\tau_{x}T_{1})\neq 0$.
If $\varepsilon(\tau_{x}T_{-1})=0$ then $\mathcal{L}\varepsilon(\tau_{x}T_{-1})=0$. Since $\mathcal L$ commutes with translation and radialization, and  $\mathcal{L}T_{-1}=z_0T_{0}$ for $z_0\neq 0$ thus $\varepsilon(\tau_{x}T_{0})=0$.
On the other hand if  $\varepsilon(\tau_{x}T_{1})=0$ then $$\langle\tau_{x}T_{1},\phi\rangle=\langle\tau_{x}\mathcal{L}T_{0},\phi\rangle=\langle\tau_{x}T_{0},\mathcal{L}\phi\rangle=0$$ for every $\phi\in\mathcal{S}_2(\mathfrak{X})^{\#}$. Since $\gamma(s)^{-1}\hat{\psi}(s)\in\mathcal{H}(S_{2})^{\#}$ for every $\psi\in\mathcal{S}_2(\mathfrak{X})^{\#}$.
Thus $\psi$ can be written as $\psi=\mathcal{L}\phi$ for some $\phi\in\mathcal{S}_2(\mathfrak{X})^{\#}$. Hence $\tau_{x}T_{0}=0$ for all $x\in\x$. This proves our claim.

It is easy to show that for every $x\in\mathfrak{X}$, the sequence $\{\varepsilon(\tau_{x}T_{k})\}$ of radial distributions satisfies the hypothesis of this theorem.
 Since the result is already proved for radial $L^{2}$-tempered distributions, we have
$$\mathcal{L}\varepsilon(\tau_{x}T_{0})=|z|\varepsilon(\tau_{x}T_{0})\quad\text{for all }x\in\mathfrak{X}.$$
Therefore $\varepsilon(\tau_{x}(\mathcal{L}T_{0}-|z|T_{0}))=0$ for all $x\in\mathfrak{X}$. From above observation we have $\mathcal{L}T_{0}=|z|T_{0}$. This complete the proof of part (a).

We shall prove part (b) of the theorem only for radial case. The proof for the general case follows in a similar way as in part (a). Now assuming that $T_{k}$ are radial, we have for any $\phi\in\mathcal{S}_2(\mathfrak{X})^{\#}$,

$$	|\langle\widehat{T_{0}},\phi\rangle|=\left|\left\langle\widehat{T_{k}},\left(\frac{z_{0}}{\gamma(s)}\right)^{k}\phi\right\rangle\right|
	\leq M\nu\left[\left(\left(\frac{z_{0}}{\gamma(s)}\right)^{k}\phi\right)^{\vee}\right]
	\leq M\mu\left[\left(\frac{z_{0}}{\gamma(s)}\right)^{k}\phi\right].$$
If $|z_{0}|<\gamma(s)$ (resp. $|z_{0}|>\gamma(s)$) for $s\in[-\tau/2,\tau/2]$, then letting $k\rightarrow\infty$ (resp. $k\rightarrow -\infty$) in the above equation we conclude that $\langle T_{0},\phi\rangle=0$ for all $\phi\in\mathcal{S}_2(\mathfrak{X})^{\#}$. This completes the proof.
\end{proof}
Now we consider the case $1<p<2$. The main difference from the above lemma and the classical Euclidean case is that the $L^p$-tempered distribution acts on holomorphic functions. Therefore the main technique of the previous lemma, namely, the use of function whose Fourier transform are supported outside of an interval will not work.

\begin{lemmaB*}\label{Theorem11}
For $1<p<2$, let $\{T_{k}\}_{k\in\mathbb{Z}^+}$ be an infinite sequence of $L^p$-tempered distributions on $\mathfrak{X}$ satisfying,
\begin{enumerate}
\item $\mathcal{L}T_{k}=\lambda T_{k+1}$ for some non-zero $\lambda\in\mathbb{C}$ and
\item $|\langle T_k,\phi\rangle|\leq M\nu(\phi)$ for all $\phi\in\mathcal{S}_p(\mathfrak{X})$, where $\nu$ is some fixed semi-norm of $\mathcal{S}_p(\mathfrak{X})$ and $M>0$.
\end{enumerate}
Then we have the following results.
\begin{enumerate}
\item[(a)] If $|\lambda|=\gamma(i\delta_{p^{\prime}})$, then $\mathcal{L}T_{0}=|\lambda|T_{0}$ and
\item[(b)] If $|\lambda|<\gamma(i\delta_{p^{\prime}})$, then $T_k=0$ for all $k\in\mathbb{Z}^+$.
\item[(c)] There are solutions which are not eigen-distributions whenever $\gamma(\tau/2+i\delta_{p^{\prime}})>|\lambda|>\gamma(i\delta_{p^{\prime}})$
\end{enumerate}
\end{lemmaB*}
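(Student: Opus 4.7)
The plan is to adapt the proof of Lemma A to the $L^p$ setting, where the main new difficulty is that $\mathcal{H}(S_p)^\#$-functions must be holomorphic on $S_p^\circ$, so one can no longer use compactly supported cutoffs in the spectral variable. First I would reduce to the case of radial $T_k$ by the radialization trick at the end of the proof of Lemma A; the only ingredient needed is $\gamma(s)^{-1}\widehat{\psi}(s)\in\mathcal{H}(S_p)^\#$ for every $\psi\in\mathcal{S}_p(\mathfrak{X})^\#$, which holds because $\gamma$ has no zeros on $S_p$ when $p>1$. In the radial case, the recursion $\mathcal{L}T_k=\lambda T_{k+1}$ becomes $\widehat{T_0}=(\lambda/\gamma)^k\widehat{T_k}$ on $\mathcal{H}(S_p)^\#$, and the isomorphism from the appendix gives
\begin{equation*}
|\langle\widehat{T_0},\phi\rangle|\le M\,\mu_m\!\left(\left(\frac{\lambda}{\gamma}\right)^{k}\phi\right),\qquad k\in\mathbb{Z}^{+},
\end{equation*}
for a fixed seminorm $\mu_m$ of $\mathcal{H}(S_p)^\#$ and every $\phi\in\mathcal{H}(S_p)^\#$.

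For part (b), the hypothesis $|\lambda|<\gamma(i\delta_{p'})=\min_{s\in S_p}|\gamma(s)|$ gives $r:=\sup_{S_p}|\lambda/\gamma|<1$. Logarithmic differentiation of $(\lambda/\gamma)^k$ shows that each derivative is bounded by $k^j$ times a bounded function times $(\lambda/\gamma)^k$, so $\mu_m\!\left((\lambda/\gamma)^k\phi\right)\le C_m k^m r^k\to 0$; consequently $\widehat{T_0}=0$, and since $\lambda\neq 0$ the recursion propagates $T_k=0$ for every $k\in\mathbb{Z}^+$. For part (c), I would write down the natural counterexample: when $\gamma(i\delta_{p'})<|\lambda|<\gamma(\tau/2+i\delta_{p'})$, the circle of radius $|\lambda|$ meets the boundary ellipse $\gamma(\partial S_p)$ in at least two symmetric pairs of points, so one can choose $z_1,z_2\in S_p$ with $\gamma(z_1)\neq\gamma(z_2)$ and $|\gamma(z_i)|=|\lambda|$. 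Setting
\begin{equation*}
T_k=\left(\frac{\gamma(z_1)}{\lambda}\right)^{k}\phi_{z_1}+\left(\frac{\gamma(z_2)}{\lambda}\right)^{k}\phi_{z_2},
\end{equation*}
and using $\phi_{z_i}\in L^{p',\infty}(\mathfrak{X})$ from Lemma \ref{phiz}, produces a sequence satisfying both hypotheses whose $T_0$ has two distinct eigencomponents and hence is not an eigen-distribution of $\mathcal{L}$.

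Part (a) is the delicate one. The only points of $S_p$ where $|\gamma(s)|=|\lambda|=\gamma(i\delta_{p'})$ are $\pm i\delta_{p'}$ modulo $\tau\mathbb{Z}$, both on $\partial S_p$; elsewhere $|\gamma(s)|>|\lambda|$ strictly. Mirroring Lemma A, I would first prove
\begin{equation*}
(\gamma(s)-|\lambda|)^{N+1}\widehat{T_0}=0
\end{equation*}
for $N$ sufficiently large (e.g.\ $N=10m+1$), and then invoke the argument from \cite{KRS2} to reduce the order from $N+1$ down to $1$, yielding $\mathcal{L}T_0=|\lambda|T_0$. The crucial estimate is $\mu_m\!\left((\lambda/\gamma)^k(\gamma-|\lambda|)^{N+1}\phi\right)\to 0$. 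I would split $S_p$ into a small neighbourhood $U_\epsilon$ of $\{\pm i\delta_{p'}\}$ and its complement: on the complement, compactness gives $|\lambda/\gamma|\le 1-\eta$ and the exponential $(1-\eta)^k$ absorbs the polynomial $k^m$ coming from Leibniz. Inside $U_\epsilon$, Taylor expansion of $\gamma$ at $s_0=i\delta_{p'}$, using that $\gamma'(s_0)$ is nonzero and purely imaginary and that $s_0$ lies on the lower boundary of $S_p$, yields $|\lambda/\gamma(s)|\le\exp(-c|s-s_0|^2)$ along $\partial S_p$ near $s_0$ and $|\gamma(s)-|\lambda||\le C|s-s_0|$. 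Applying Leibniz with $|h|^{N+1-j}e^{-ck|h|^2}$ estimates, each term attains its maximum at $|h|\sim\sqrt{(N+1-j)/(ck)}$ with value of order $k^{(i+j-N-1)/2}$, which tends to $0$ provided $N\ge 2m$.

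The principal obstacle is precisely this last estimate. In Lemma A one could localise the argument to a compact neighbourhood of $\pm\alpha$ by multiplying by a smooth cutoff $\psi_\epsilon$ supported there; here every test function must extend holomorphically to $S_p^\circ$, so the suppression of the $k^m$ growth arising from differentiating $(\lambda/\gamma)^k$ has to come entirely from the algebraic vanishing of $(\gamma-|\lambda|)^{N+1}$ at $\pm i\delta_{p'}$ together with the quadratic non-degeneracy of the minimum of $|\gamma|$ at those boundary points.
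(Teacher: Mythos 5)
Your proposal is correct and follows essentially the same route as the paper: reduce to radial distributions, prove $(\gamma-|\lambda|)^{N+1}\widehat{T_0}=0$ via a Leibniz estimate that plays the strict decay of $|\lambda/\gamma(z)|^{k}$ away from $\pm i\delta_{p'}$ against the algebraic vanishing of $(\gamma-|\lambda|)^{N+1}$ at those points, then invoke the order-reduction argument of \cite{KRS2}; parts (b) and (c) also match the paper's treatment. The only divergence is technical rather than conceptual: the paper localizes with a $k$-dependent shrinking window $V_k$, obtaining $|\lambda/\gamma|^{k}\leq(1+c/\sqrt{k})^{-k}$ off $V_k$ and $\bigl|\gamma-|\lambda|\bigr|\lesssim k^{-1/6}$ on $V_k$ with $N=7m+1$, whereas you use a fixed neighbourhood of $\pm i\delta_{p'}$ together with the Gaussian bound $e^{-ck|h|^{2}}$ and $N\geq 2m$ — both deliver the required convergence of the seminorms.
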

\begin{proof}
We prove the this result for radial distributions, while the general case follows in a similar way as in Theorem \ref{Theorem10}.
For $p\in(1,2)$ let $z_{0}=i\delta_{p^{\prime}}$.  For a fixed $N\in\mathbb{Z}_{+}$ we claim that $$(\gamma(z_{0})-\gamma(z))^{N+1}\widehat{T_{0}}=0.$$
As observed earlier, for any $\phi\in\mathcal{H}(S_{p})^{\#}$  we have,
$$ |\langle(\gamma(z_{0})-\gamma(z))^{N+1}\widehat{T_{0}},\phi\rangle|\leq
M\mu\left[\left(\left(\frac{\gamma(z_{0})}{\gamma(z)}\right)^{k}(\gamma(z_{0})-\gamma(z))^{N+1}\phi\right)\right].$$
Since $\gamma(z)$ and $\phi(z)$ are $\tau-$periodic, even functions on $S_{p}$, so the seminorm $\mu$ on $\mathcal{H}(S_{p})^{\#}$ is given by
\begin{align*}
\mu\left[\left(\left(\frac{\gamma(z_{0})}{\gamma(z)}\right)^{k}(\gamma(z_{0})-\gamma(z))^{N+1}\phi\right)\right]&=\sup\limits_{z\in S_{p}}\left|\frac{d^{m}}{dz^{m}}\left(\left(\frac{\gamma(z_{0})}{\gamma(z)}\right)^{k}(\gamma(z_{0})-\gamma(z))^{N+1}\phi(z)\right)\right|\\
&=\sup\limits_{z\in S_{p}^{+}}\left|\frac{d^{m}}{dz^{m}}\left(\left(\frac{\gamma(z_{0})}{\gamma(z)}\right)^{k}(\gamma(z_{0})-\gamma(z))^{N+1}\phi(z)\right)\right|\\
&=\sup\limits_{z\in S_{p}^{+}} F_{k}(z)\text{ (say),}
\end{align*}
where $S_{p}^{+}=\{z\in S_{p}:~\; |\Re z|\leq \tau/2\;\; \text{and}\;\;  \Im z\geq 0\}$.

To prove our claim it is enough to show that $\sup\limits_{z\in S_{p}^{+}} F_{k}\rightarrow 0$ as $k\rightarrow\infty$. Now for all $z\in S_{p}^{+}$,
\begin{align}
F_{k}(z)&\leq\sum\limits_{i=0}^{m}\binom{m}{i}\left|\frac{d^{i}}{dz^{i}}\left(\left(\frac{\gamma(z_{0})}{\gamma(z)}\right)^{k}\right)\right|
\left|\frac{d^{m-i}}{dz^{m-i}}\left((\gamma(z_{0})-\gamma(z))^{N+1}\phi(z)\right)\right|\nonumber\\
&\leq\sum\limits_{i=0}^{m}\binom{m}{i}B_{m}\left|\frac{\gamma(z_{0})}{\gamma(z)}\right|^{k}k(k+1)(k+2)\ldots(k+i-1)\label{eq30}\\
&\hspace*{4.8cm}\times\left|\frac{d^{m-i}}{dz^{m-i}}\left((\gamma(z_{0})-\gamma(z))^{N+1}\phi(z)\right)\right|.\nonumber
\end{align}
From above inequality we also have
\begin{equation}F_{k}(z)\leq A_{m}k^{m}\left|\frac{\gamma(z_{0})}{\gamma(z)}\right|^{k}\label{eq31}
\end{equation}

where
$A_{m}=\max\limits_{0\leq i\leq m}\left[\sup\limits_{z\in S_{p}^{+}}\left|\frac{d^{i}}{dz^{i}}\left((\gamma(z_{0})-\gamma(z))^{N+1}\phi(z)\right)\right|\right].$

 In the above calculation we have also used the following estimates. For every $z\in S_{p}^{+}$,
\begin{enumerate}
	\item[(i)] $\left|\frac{\gamma(z_{0})}{\gamma(z)}\right|\leq 1$,
	\item[(ii)] $\left|\frac{d^{i}}{dz^{i}}\left(\left(\frac{\gamma(z_{0})}{\gamma(z)}\right)^{k}\right)\right|\leq B_{m}\left|\frac{\gamma(z_{0})}{\gamma(z)}\right|^{k}k(k+1)(k+2)\ldots(k+i-1)$, where $0\leq i\leq m$.
\end{enumerate}

In order to prove that $\sup\limits_{z\in S_{p}^{+}} F_{k}\rightarrow 0$ as $k\rightarrow\infty$, it is enough to show that
\begin{enumerate}
	\item[(a)]  $\sup\limits_{z\in V_{k}^{c}} F_{k}\rightarrow 0$,
	\item[(b)] $\sup\limits_{z\in V_{k}} F_{k}\rightarrow 0$ as $k\rightarrow\infty$.
\end{enumerate}
 where for $k$ (large enough)
$$V_{k}=\{z\in S_{p}^{+}: |\Re z|<(k^{1/4}\log q)^{-1}\text{ and } \delta_{p}-\log(1+1/k^{1/6})(\log q)^{-1}<\Im z\leq\delta_{p}\}.$$
First we deal with (a). In view of equation (\ref{eq31}), it is sufficient to show that for every $z\in V_{k}^{c}$ there exists a constant $c>0$ such that
\begin{equation}\label{eq32}
\left|\frac{\gamma(z_{0})}{\gamma(z)}\right|^{k}\leq\left(1+\frac{c}{\sqrt{k}}\right)^{-k}.
\end{equation}

First case, if $0\leq \Im z\leq \delta_{p}-\frac{\log(1+1/k^{1/6})}{\log q}$ then $|\gamma(z)|\geq |\gamma(i(\delta_{p}-\frac{\log(1+1/k^{1/6})}{\log q}))|$.
Hence
\begin{align}
	|\gamma(z)|-|\gamma(z_{0})|&\geq \left(\frac{q^{1/p}+q^{1/p^{\prime}}}{q+1}\right)-\left(\frac{q^{1/p}(1+k^{-1/6})^{-1}+q^{1/p^{\prime}}(1+k^{-1/6})}{q+1}\right)\nonumber\\
	&=\frac{1}{q+1}\frac{q^{1/p^{\prime}}}{k^{1/6}(1+k^{1/6})}\left[k^{1/6}(q^{2/p-1}-1)-1\right].\label{eq33}
\end{align}
Since $q^{2/p-1}-1>0$, there exists $k_{0}\in\mathbb{N}$ such that $k^{1/6}(q^{2/p-1}-1)\geq 2$ for every $k\geq k_{0}$. This together with (\ref{eq33}) gives the desired inequality (\ref{eq32}).

Other case if $z\in V_{k}^{c}$ is such that $\tau/2\geq|\Re z|\geq (k^{1/4}\log q)^{-1}$, then
$$|\gamma(z)|\geq \left(1-\frac{q^{1/p^{\prime}}+q^{1/p}}{q+1}cos(k^{-1/4})\right)$$
and
$$|\gamma(z)|-|\gamma(z_{0})|\geq\left(\frac{q^{1/p^{\prime}}+q^{1/p}}{q+1}(1-\cos(k^{-1/4}))\right)\geq \frac{c}{k^{1/2}}.$$
Thus for every $z\in V_{k}^{c}$, inequality (\ref{eq32}) holds. Eventually,
$\sup\limits_{z\in V_{k}^{c}} F_{k}\rightarrow 0$ as $k\rightarrow\infty$.

 Now let us assume that $z\in V_{k}$. Then $z=a+i\delta_{r}$, where $|a|<(k^{1/4}\log q)^{-1}$ and $\delta_{p}-\log(1+1/k^{1/6})(\log q)^{-1}<\delta_{r}\leq\delta_{p}$ and we have
\begin{align*}
	|\gamma(z)-\gamma(z_{0})|^{2}&=\left(\frac{q^{1/p^{\prime}}+q^{1/p}}{q+1}-\frac{q^{1/r^{\prime}}+q^{1/r}}{q+1}\cos(a\log q)\right)^{2}
	+\left(\frac{q^{1/r}-q^{1/r^{\prime}}}{q+1}\right)^{2}\sin^{2}(a\log q)\\
	&\leq \left(\frac{q^{1/p}+q^{1/p^{\prime}}}{q+1}-\frac{q^{1/p}(1+k^{-1/6})^{-1}+q^{1/p^{\prime}}(1+k^{-1/6})}{q+1}\cos(k^{-1/4})\right)^{2}\\
	&\hspace*{8cm}+\left(\frac{q^{1/p}-q^{1/p^{\prime}}}{q+1}\right)^{2}\sin^{2}(k^{-1/4})
\end{align*}
It follows from the inequality $\sqrt{|x|^2+|y|^2}\leq |x|+|y|$ that
\begin{align*}
	|\gamma(z)-\gamma(z_{0})|&\leq \left(\frac{q^{1/p}+q^{1/p^{\prime}}}{q+1}-\frac{q^{1/p}(1+k^{-1/6})^{-1}+q^{1/p^{\prime}}(1+k^{-1/6})}{q+1}\cos(k^{-1/4})\right)+\frac{c_{1}}{k^{1/4}}\\
	&= \left(\frac{q^{1/p}+q^{1/p^{\prime}}}{q+1}-\frac{q^{1/p}(1+k^{-1/6})^{-1}+q^{1/p^{\prime}}(1+k^{-1/6})}{q+1}\right)\\
	&\hspace*{3cm}+\frac{q^{1/p}(1+k^{-1/6})^{-1}+q^{1/p^{\prime}}(1+k^{-1/6})}{q+1}(1-\cos(k^{-1/4}))+\frac{c_{1}}{k^{1/4}}\\
	&\leq \frac{c_{3}}{k^{1/6}}+\frac{c_{2}}{k^{1/2}}+\frac{c_{1}}{k^{1/4}}\leq\frac{c}{k^{1/6}},
\end{align*}
where the constants $c_{1},c_{2},c_{3}$ (and hence $c$) are independent of $k$. If we take $N=7m+1$ then each term in equation (\ref{eq30}) atleast contains the factor $(\gamma(z_{0})-\gamma(z))^{N+1-m}=(\gamma(z_{0})-\gamma(z))^{6m+2}$.
Thus from above estimates and equation (\ref{eq30}), we finally have $\sup\limits_{z\in V_{k}} F_{k}\leq \frac{C}{k^{1/3}},$ where $C$ is independent of $k$ and $\sup\limits_{z\in S_{p}} F_{k}\rightarrow 0$ as $k\rightarrow\infty$.  Using the same argument given in \cite{KRS2} one can easily prove that $N=0$. This completes the proof of part (a) for radial and eventually for general distributions.

The proof of part (b) is similar to that of part (b) Lemma A. To prove part (c), assume that $\gamma(\tau/2+i\delta_{p^{\prime}})>|\lambda|>\gamma(i\delta_{p^{\prime}})$. Then $\gamma(S_{p}^{\circ})$ intersects $\{w\in\C:|w|=|\lambda|\}$ at infinitely many points. Let $p<q<r<2$ be such that $\gamma(\alpha+i\delta_{q^{\prime}})e^{-i\theta_1}=\gamma(\beta+i\delta_{r^{\prime}})e^{-i\theta_2}=\lambda$ for some $\theta_1,\theta_2\in(0,2\pi)$. If we define $T_{k}=e^{ik\theta_1}\phi_{\alpha+i\delta_{q^{\prime}}}+e^{ik\theta_2}\phi_{\beta+i\delta_{r^{\prime}}}$ where $k\in\mathbb Z_{+}$, then $T_k$ satisfies all the hypothesis of Lemma B1 but $T_0$ fails to be an eigen-distribution of $\mathcal L$.
\end{proof}
%%%%%%%%%%%%%%%%%%%%%%%%%%%%%%%%%%%%%%%%%%%%%%%%%%%%%%%%%%%%%%%%%%%%%%%%%%%%
Now we state the another important lemma whose proof is just a repetation of the arguments of the Lemma B1.

\begin{lemmaB1*}\label{Theorem12}
For $1<p<2$, let $\{T_{-k}\}_{k\in\mathbb{Z}^+}$ be an infinite sequence of $L^p$-tempered distributions on $\mathfrak{X}$ satisfying,
\begin{enumerate}
\item $\mathcal{L}T_{-k}=\lambda T_{-k+1}$ for some non-zero $z\in\mathbb{C}$ and
\item $|\langle T_{-k},\phi\rangle|\leq M\nu(\phi)$ for all $\phi\in\mathcal{S}_p(\mathfrak{X})$, where $\nu$ is some fixed semi-norm of $\mathcal{S}_p(\mathfrak{X})$ and $M>0$.
\end{enumerate}
Then we have the following results.
\begin{enumerate}
\item[(a)] If $|\lambda|=\gamma(\tau/2+i\delta_{p^{\prime}})$, then $\mathcal{L}T_{0}=|z|T_{0}$ and
\item[(b)] If $|\lambda|>\gamma(\tau/2+i\delta_{p^{\prime}})$, then $T_{-k}=0$ for all $k\in\mathbb{Z}^+$.
\item[(c)] There are solutions which are not eigen-distributions whenever $\gamma(\tau/2+i\delta_{p^{\prime}})> |\lambda|>\gamma(i\delta_{p^{\prime}})$
\end{enumerate}
\end{lemmaB1*}

\noindent {\it Proof of Theorem A and Theorem B:} Now we will conclude the rest of the proof of the our main theorems. Let $z_0=\gamma(z)$ for $z\in\R\setminus\tau/2\Z$. From the give assumption in Theorem A if we define $T_k=\gamma(z)^{-k}\mathcal L^k f$ then $T_k$ is a $L^2$-tempered distribution and the proof of the Theorem A is a consequence of the Lemma A.
Similarly if we assume that $T_{k}=\gamma(i\delta_{p^{\prime}})^{-k}\mathcal{L}^{k}f$ where $k\in\mathbb{Z}_{+}$. According to the hypothesis of Theorem B, $\|T_{k}\|_{p^\prime,\infty}\leq M$	 for all $k\in\mathbb{Z}_{+}$. It is easy to show that each $T_{k}$ is an $L^p$-tempered distribution which satisfies all the hypothesis of Lemma B1. Hence $\mathcal{L}f=\gamma(i\delta_{p^{\prime}})f$. This completes the proof of the 1st part of Theorem B.
The remaining part of Theorem B will follow from  the Lemma B2.
%%%%%%%%%%%%%%%%%%%%%%%%%%%%%%%%%%%%%%%%%%%%%%%%%%%%%%%%%%%%%%%%%%%%%%%%%%%%%%%%%%%%%%%%%%%%%%%%%%%%%%%%%%%%%%%%%%%%%%%%%%%%%%%%%%%%%%%%
\section{Sharpness of the main results and the case $q=1$}
\begin{enumerate}
\item Observe that if $\mathcal L u=\gamma(z)u$ for $z\in(\tau/2)\mathbb{Z}$ then $u\notin L^{2,\infty}(\mathfrak{X})$. Assume $u(x_o)\neq 0$ for some $x_o\in \mathfrak X$.
If $u\in L^{2,\infty}(\mathfrak{X})$ then  $f(x)=\int_Ku(x_okx)dk=u(x_o)\phi_z(x)$ also belong to $L^{2,\infty}(\mathfrak{X})$. In view of Lemma \ref{phiz} this not true as
	$\phi_{z}$ does not belong to $L^{2,\infty}(\mathfrak{X})$ whenever $z\in(\tau/2)\mathbb{Z}$. This observation shows that Theorem A is no longer valid for any  $z\in(\tau/2)\mathbb{Z}$. However if we replace the $L^{2,\infty}$ estimate by $\|\phi_{0}^{-1}\mathcal{L}^{k}f\|_{L^{\infty}(\mathfrak{X})}\leq M|\gamma(z)|^{k}$ for all $k\in\mathbb{Z}$, then Theorem A holds true.
	\item It follows from equation (\ref{eqsf}) and Lemma \ref{phiz} that in Theorem A (resp. in Theorem B), the $L^{2,\infty}$ (resp. the $L^{p^{\prime},\infty}$) estimate cannot be replaced by $L^{2,r}$, $r<\infty$ (resp. $L^{q^{\prime},r}$ with $q>p$ or $L^{p^{\prime},r}$ with $r<\infty$). The proof is similar as above.
	\item  Unlike Theorem B, it is necessary to consider all integral powers of $\mathcal{L}$ in Theorem A. Otherwise for $z\in\R\setminus(\tau/2)\Z$, if we choose $s_{1},s_{2}\in\R\setminus(\tau/2)\Z$ such that $\gamma(s_{i})\leq\gamma(z)$ for each $i=1,2$ and define $f=\phi_{s_{1}}+\phi_{s_{2}}$ then $f$ satisfies all the hypothesis of Theorem A but $\mathcal{L}f\neq\gamma(z)f$.
	\item The conclusions of Theorem B, Part $1$ (resp. Part $2$) does not hold for $z=\alpha\pm i\delta_{p^{\prime}}$ where $\alpha\in\mathbb{R}\setminus (n\tau)\mathbb{Z}$ (resp. $\alpha\in\mathbb{R}\setminus ((2n+1)\tau/2)\mathbb{Z}$). The counterexamples can be constructed in a similar way as we did in Lemma B1, part (c).
    \item In Theorem B (resp. in Theorem A), if we consider $z=\alpha\pm i\delta_{p^{\prime}}$ where $\alpha\in\mathbb{R}$ (resp. $z\in\R$) and substitute the $L^{p^{\prime},\infty}$ norm (resp. $L^{2,\infty}$) by the $L^{q^{\prime},r}$ where $1\leq q<p<2$ (resp. $L^{p^{\prime},r}$ where $1<p<2$, $1\leq r\leq\infty$ or $L^{\infty}$), then there are functions which satisfies the hypothesis but are not eigenfunctions of $\mathcal L$.
\end{enumerate}
\subsection{Results on $\mathbb{Z}$}\label{Section 5}
Having dealt with all viable generalizations of Roe's result on homogeneous trees of degree $q+1$ where $q\geq 2$, it becomes quite plausible to consider this problem on a homogeneous tree of degree 2 (i.e., when $q=1$), which may be identified to $\mathbb{Z}$. This identification distinguishes it from others in terms of its geometric and analytic properties. However, the most intriguing difference in the context of this article lies in the spectrum of their respective Laplace operators. Hence we deal with this case separately. The Laplace operator on $\mathbb{Z}$ is defined as
$$\mathcal L_{\mathbb{Z}} f(m)=f(m)-\frac{f(m-1)+f(m+1)}{2}\quad\text{for all }m\in\mathbb{Z}.$$
Unlike the spectrum of $\mathcal{L}$ which is a $p$-depend on the elliptic region, the spectrum of $\Delta_{\mathbb{Z}}$ is always a line segment. Having said that, it must also be noted that the term $-2\Delta_{\mathbb{Z}}$ is in some way, a `discrete' representation of the operator $-d^{2}/dx^{2}$ whose spectrum is also a line segment in $\mathbb{R}$. These observations allowed us to use the Fourier transform techniques and motivated us to emphasize an analogy with Roe's result on $\mathbb{R}$.
The Fourier transform $\widehat{f}$ of a finitely supported function $f$ defined on $\mathbb{Z}$, is a function on $\mathbb{T}$ given by
$$\widehat{f}(s)=\sum\limits_{m\in\mathbb{Z}}f(m)e^{ims}\quad\text{ where }f(m)=\int\limits_{-\pi}^{\pi}\widehat{f}(s)e^{ims}ds$$
represents it's inverse Fourier transform. Let us define the Schwartz space $\mathcal{S}(\mathbb{Z})=\{f:\mathbb{Z}\rightarrow\mathbb{C}:\lambda_{n}(f)<\infty\text{ for all }n\in\mathbb{N}\}$, equipped with the countable family of semi-norms $\lambda_{n}(f)=\sup(1+|m|)^{n}|f(m)|$. It is easy to see that the map $f\rightarrow\widehat{f}$ is a topological isomorphism from $\mathcal{S}(\mathbb{Z})$ onto $\mathcal{C}^{\infty}(\mathbb T)$ where $\mathcal{C}^{\infty}(\mathbb T)=\{g:\mathbb{R}\rightarrow\mathbb{C}:g\text{ is infintely differentiable on }\mathbb{R},~g(x+\pi)=g(x)\text{ and }\mu_{l}(g)<\infty\text{ for all }x\in\mathbb{R},l\in\mathbb{N}\text{ respectively}\}$ where $\mu_{l}(g)=\sup|g^{(l)}(s)|$ defines a countable family of semi-norms. Analogous to the Euclidean case, a distribution $T$ is a continuous linear functional on $\mathcal{S}(\mathbb{Z})$ whose Fourier transform is defined as
$$\langle \widehat{T},\phi\rangle=\langle T,\left(\phi^{\vee}\right)^{\#}\rangle,\text{ where }\widehat{\phi^{\vee}}=\phi\text{ and }\left(\phi^{\vee}\right)^{\#}(m)=\phi^{\vee}(-m).$$
 Now we state the Theorem  on $\mathbb{Z}$, which  can be proved by the similar argument developed by Roe in \cite{R}.
\begin{Theorem}\label{Theorem4.1}
	Let $\{f_{k}\}_{k\in\mathbb{Z}}$ be a doubly infinte sequence of functions on $\mathbb{Z}$ which satisfies
$\mathcal{L}_{\mathbb{Z}} f_{k}=(1-\cos\alpha)f_{k+1}\quad\text{ for }\alpha\neq 0$
	and there exists constants $M_{k}\geq 0$, $\beta\in(0,1]$ and a non-negative integer $n$ such that
	$|f_{k}(m)|\leq M_{k}(1+|m|)^{n+\beta}\text{ for all }k,m\in\mathbb{Z}.$	
If $\liminf\limits_{k\rightarrow \infty}\frac{M_{k}}{(1+\epsilon)^{k}}=0$  and $\liminf\limits_{k\rightarrow \infty}\frac{M_{-k}}{(1+\epsilon)^{k}}=0$  for all $ \epsilon>0$ 	then $f_{0}(m)=p(m)e^{im\alpha}+q(m)e^{-im\alpha}$ where $p,q$ are polynomials of degree atmost $n$.
\end{Theorem}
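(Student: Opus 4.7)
The plan is to pass to the Fourier transform on $\mathbb{Z}$, which conjugates $\mathcal{L}_{\mathbb{Z}}$ into multiplication by $1-\cos s$ on the torus $[-\pi,\pi]$. Setting $G(s):=(1-\cos s)/(1-\cos\alpha)$, the recursion $\mathcal{L}_{\mathbb{Z}}f_{k}=(1-\cos\alpha)f_{k+1}$ yields, as distributions on $[-\pi,\pi]$,
\[
\widehat{f_{k}}(s)=G(s)^{k}\,\widehat{f_{0}}(s)\qquad(k\geq 0),
\]
together with the identity $(1-\cos s)^{j}\widehat{f_{-j}}=(1-\cos\alpha)^{j}\widehat{f_{0}}$ for $j\geq 0$. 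The growth assumption $|f_{k}(m)|\leq M_{k}(1+|m|)^{n+\beta}$ makes each $f_{k}$ a tempered distribution on $\mathbb{Z}$ with $|\langle f_{k},g\rangle|\leq CM_{k}\lambda_{n+\beta+2}(g)$, and via the isomorphism $\mathcal{S}(\mathbb{Z})\cong\mathcal{C}^{\infty}([-\pi,\pi])$ this translates into $|\langle\widehat{f_{k}},\phi\rangle|\leq C'M_{k}\mu_{L_{0}}(\phi)$ for a fixed $L_{0}=L_{0}(n,\beta)$. The multiplier $G$ is real with $|G|<1$ on $\{|s|<\alpha\}$, $|G|>1$ on $\{|s|>\alpha\}$, and $|G(\pm\alpha)|=1$.

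The main step is to localise: I claim $\supp\widehat{f_{0}}\subseteq\{\alpha,-\alpha\}$. Fix $\eta>0$ and let $\phi\in\mathcal{C}^{\infty}$ with $\supp\phi\subseteq\{|s|\leq\alpha-\eta\}$; then $|G|\leq\rho<1$ on $\supp\phi$. A Leibniz calculation, using uniform bounds on $G$ and its derivatives on this compact set, delivers $\mu_{L}(G^{j}\phi)\leq C_{L}\,j^{L}\,\rho^{\,j}$, so
\[
|\langle\widehat{f_{0}},\phi\rangle|=|\langle\widehat{f_{-j}},G^{j}\phi\rangle|\leq C\,M_{-j}\,j^{L_{0}}\,\rho^{\,j}.
\]
Choosing $\epsilon>0$ with $(1+\epsilon)\rho<1$ and invoking $\liminf_{j\to\infty}M_{-j}/(1+\epsilon)^{j}=0$ forces the right side to vanish along a subsequence, whence $\langle\widehat{f_{0}},\phi\rangle=0$. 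A symmetric argument for $\phi$ supported in $\{|s|\geq\alpha+\eta\}$ uses $|G^{-1}|\leq\rho'<1$, the identity $\langle\widehat{f_{0}},\phi\rangle=\langle\widehat{f_{k}},G^{-k}\phi\rangle$ for $k>0$, and the other liminf to conclude the same vanishing. A smooth partition of unity on $[-\pi,\pi]\setminus\{\pm\alpha\}$ disposes of an arbitrary $\phi$ supported away from $\pm\alpha$, and $\eta\downarrow 0$ gives the claim.

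Since $\widehat{f_{0}}$ is a distribution of finite order supported at two points, it equals $\sum_{j=0}^{L_{1}}a_{j}\delta_{\alpha}^{(j)}+\sum_{j=0}^{L_{2}}b_{j}\delta_{-\alpha}^{(j)}$, and Fourier inversion gives $f_{0}(m)=p(m)e^{im\alpha}+q(m)e^{-im\alpha}$ with $\deg p=L_{2}$ and $\deg q=L_{1}$. To bound these degrees, set $d=\max(\deg p,\deg q)$ and let $a,b$ denote the leading coefficients. Then
\[
\frac{1}{M}\sum_{m=1}^{M}\left|\frac{f_{0}(m)}{m^{d}}\right|^{2}\longrightarrow|a|^{2}+|b|^{2}\qquad(M\to\infty),
\]
because the cross term $\operatorname{Re}(a\bar{b}e^{2im\alpha})$ averages to zero when $\alpha\notin\pi\mathbb{Z}$. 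The hypothesis $|f_{0}(m)|\leq M_{0}(1+|m|)^{n+\beta}$ bounds the left side by $CM^{2(n+\beta-d)}\to 0$ if $d>n+\beta$, contradicting $|a|^{2}+|b|^{2}>0$; hence $d\leq n+\beta$, i.e.\ $d\leq n$. The degenerate case $\alpha\in\pi\mathbb{Z}$ collapses $\pm\alpha$ into one point and $f_{0}(m)$ reduces to a single polynomial times $(-1)^{m}$, whose degree bound is immediate.

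The main technical obstacle is securing the uniform estimate $\mu_{L}(G^{j}\phi)\leq C_{L}\,j^{L}\,\rho^{\,j}$ in $j$ over the whole of $\supp\phi$, particularly near $s=0$ where $G$ vanishes to order two. This amounts to counting the terms in $(G^{j})^{(L)}=\sum\binom{L}{L_{1},\dots,L_{j}}G^{(L_{1})}\cdots G^{(L_{j})}$: only $O(L)$ of the indices can be nonzero, and the $\binom{j}{k}$ factor for $k\leq L$ active indices contributes the polynomial growth $j^{L}$, while the remaining factor $G^{j-k}$ supplies the geometric decay $\rho^{j-k}\leq\rho^{j-L}$. A secondary care is the correct pairing of the two liminf hypotheses with the two halves $\{|s|\lessgtr\alpha\}$ of the torus; once these are in place, the rest of the argument is a close analogue of the Roe/Strichartz scheme already adapted in Lemmas~A and~B1.
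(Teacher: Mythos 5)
Your overall architecture is sound and, importantly, it supplies details the paper omits entirely: the paper only remarks that Theorem 4.1 ``can be proved by the similar argument developed by Roe'' and gives no proof. Your route (Fourier transform on $\mathbb{Z}$, localization of $\supp\widehat{f_{0}}$ to $\{\pm\alpha\}$ by pairing the liminf on $M_{-j}$ with the region $\{|G|<1\}$ and the liminf on $M_{k}$ with $\{|G|>1\}$, the structure theorem for finite-order distributions supported at points, and Ces\`aro averaging to control the degree) is the Strichartz-style distributional scheme consistent with Section 3 of the paper. The Leibniz estimate $\mu_{L}(G^{j}\phi)\leq C_{L}j^{L}\rho^{j}$ is unproblematic, including near $s=0$ where $G$ vanishes (smallness of $G$ only helps an upper bound), and the degenerate case $\alpha\in\pi\mathbb{Z}$ is correctly set aside.

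The genuine gap is the final inference ``hence $d\leq n+\beta$, i.e.\ $d\leq n$.'' This is valid only for $\beta\in(0,1)$: when $\beta=1$ the averaging argument permits $d=n+1$, and no repair is possible because the statement itself fails at that endpoint. Indeed, take $\alpha\notin\pi\mathbb{Z}$, set $c=i\cot(\alpha/2)=i\sin\alpha/(1-\cos\alpha)\neq 0$, and define $f_{k}(m)=(m-kc)e^{im\alpha}$ for all $k\in\mathbb{Z}$. Since $\mathcal{L}_{\mathbb{Z}}(me^{im\alpha})=e^{im\alpha}\bigl[m(1-\cos\alpha)-i\sin\alpha\bigr]$ and $\mathcal{L}_{\mathbb{Z}}e^{im\alpha}=(1-\cos\alpha)e^{im\alpha}$, one checks $\mathcal{L}_{\mathbb{Z}}f_{k}=(1-\cos\alpha)f_{k+1}$. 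Moreover $|f_{k}(m)|\leq\bigl(1+|k||\cot(\alpha/2)|\bigr)(1+|m|)$, so the growth hypothesis holds with $n=0$, $\beta=1$, $M_{k}=1+|k\cot(\alpha/2)|$, and both liminf conditions hold because $M_{\pm k}$ grows only linearly; yet $f_{0}(m)=me^{im\alpha}$ is not $pe^{im\alpha}+qe^{-im\alpha}$ with constant $p,q$. So your proof is complete for $\beta\in(0,1)$, where $d\leq n+\beta<n+1$ does force $d\leq n$, but for $\beta=1$ you must either exclude that case or weaken the conclusion to $\deg p,\deg q\leq n+1$; this is a defect of the theorem as stated rather than of your localization argument, and it is worth flagging explicitly rather than papering over with ``i.e.\ $d\leq n$.''
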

\noindent Remark:
	We can also extend the above result to $\mathbb{Z}^{n}$ (as done by Strichartz \cite{S}) where the Laplacian is defined as
	$$\mathcal{L}_{\mathbb{Z}^{n}}f(m)=f(m)-\frac{1}{2n}\sum\limits_{k:|m-k|=1}f(k).$$
But here we assume that $M_{k}$ satisfies a sublinear growth, that is $\lim\limits_{k\rightarrow \pm\infty}\frac{M_{k}}{k}=0$.

%%%%%%%%%%%%%%%%%%%%%%%%%%%%%%%%%%%%%%%%%%%%%%%%%%%%%%%%%%%%%%%%%%%%%%%%%%%%%%%%%%%%%%%%%%%%%%%%%%%%%%%%%%%%%%%%%%%%%%%%%%%%%%%%%%%%%%%%%%%%%%%%%%%%%%%%

\section{Appendix}\label{Section 6}
To make our exposition self-contained, we now prove the isomorphism theorems for the spherical Fourier transform defined on the space $\mathcal{S}_p(\mathfrak{X})^{\#}$ where $1<p\leq 2$. However, the $\mathcal{S}_2(\mathfrak{X})^{\#}$ isomorphism theorem is already proved in \cite{JF} (see Theorem 3.3).
Recalling expression (\ref{Equation 2.6}), the spherical Fourier transform of a function $f\in\mathcal{D}(\mathfrak{X})^{\#}$ can also be written as
\begin{equation}\label{eq15}
\hat{f}(z)=\sum\limits_{n\in\mathbb{Z}}\mathcal{A}f(n)q^{inz},
\end{equation}
where $\mathcal{A}f$ denotes the Abel transformation of $f$. Cowling et al. proved (see Theorem 2.5, \cite{CMS}) that $f\rightarrow\mathcal{A}f$ is a topological isomorphism from $\mathcal{S}_p(\mathfrak{X})^{\#}$ onto $q^{-\delta_{p}|\cdot|}S_{ev}(\mathbb{Z})$, for every $p\in(1,2]$ where $S_{ev}(\mathbb{Z})$ is the space of all even functions on $\mathbb{Z}$ such that $\lambda_{m}(F)=\sup\limits_{n\in\mathbb{Z}}(1+|n|)^m|F(n)|<\infty$
for all $m\in\mathbb{Z}_{+}$ and $\lambda_{m}(\cdot)$ defines a countable family of semi-norms on $S_{ev}(\mathbb{Z})$. In fact, for any natural number $m\geq 2$, there exists a constant $C(p,m)>0$ such that for all $f\in\mathcal{S}_p(\mathfrak{X})^{\#}$,
\begin{equation}\label{eq16}
C^{-1}\lambda_{(m-2)}(q^{\delta_{p}|\cdot|}\mathcal{A}f)\leq\nu_{m}(f)\leq C\lambda_{m}(q^{\delta_{p}|\cdot|}\mathcal{A}f).
\end{equation}
We use the above result to prove the following isomorphism theorem. Proof of this theorem is influenced by the technique given in \cite{CMS} and \cite{BD}.

\begin{Theorem}\label{lp isom}
The map $f\rightarrow\hat{f}$ is a topological isomorphism from $\mathcal{S}_p(\mathfrak{X})^{\#}$ onto $\mathcal{H}(S_{p})^{\#}$, for every $p\in(1,2]$.
\end{Theorem}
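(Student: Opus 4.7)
The plan is to factor the spherical Fourier transform through the Abel transform. By the Cowling--Meda--Setti result quoted just above the theorem, the map $f\mapsto \mathcal{A}f$ is already a topological isomorphism from $\mathcal{S}_p(\mathfrak{X})^{\#}$ onto the weighted Schwartz sequence space $q^{-\delta_p|\cdot|}S_{ev}(\mathbb{Z})$, so it suffices to show that the ``periodic Fourier series'' operator
\[
\mathcal{F}(a)(z)=\sum_{n\in\mathbb{Z}}a_n\, q^{inz}
\]
is a topological isomorphism from $q^{-\delta_p|\cdot|}S_{ev}(\mathbb{Z})$ onto $\mathcal{H}(S_p)^{\#}$. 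Since $\hat{f}(z)=\sum_n \mathcal{A}f(n)q^{inz}$ by (\ref{eq15}), the theorem will follow by composition.

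For the forward direction, write $a_n=q^{-\delta_p|n|}g(n)$ with $g\in S_{ev}(\mathbb{Z})$. For $z\in S_p$ we have $|q^{inz}|=q^{-n\Im z}\le q^{|n|\delta_p}$, so
\[
|a_n q^{inz}|\le |g(n)|,
\]
giving absolute and uniform convergence of the defining series on $S_p$. Each partial sum is entire, so by Weierstrass $\mathcal{F}(a)$ is holomorphic on $S_p^{\circ}$ and continuous on $\partial S_p$; it is automatically even (because $g$ is) and $\tau$-periodic (because $q^{in\tau}=1$). The same bound applied to the term-by-term derivatives
\[
\mathcal{F}(a)^{(m)}(z)=\sum_n a_n (in\log q)^m q^{inz}
\]
yields $|\mathcal{F}(a)^{(m)}(z)|\le (\log q)^m\sum_n|n|^m|g(n)|\le C\,\lambda_{m+2}(g)$, hence $\mu_m(\mathcal{F}(a))\le C\,\lambda_{m+2}(q^{\delta_p|\cdot|}\mathcal{A}f)$, which combined with (\ref{eq16}) shows continuity of $f\mapsto\hat{f}$.

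For the inverse direction, given $\phi\in\mathcal{H}(S_p)^{\#}$ define
\[
c_n=\frac{1}{\tau}\int_{-\tau/2}^{\tau/2}\phi(s)\,q^{-ins}\,ds,
\]
so that $\phi(s)=\sum_n c_n q^{ins}$ on $\mathbb{R}$ by standard Fourier inversion, and $c_n=c_{-n}$ since $\phi$ is even. The crucial step, which I expect to be the main technical obstacle, is to show that $\{q^{\delta_p|n|}c_n\}$ lies in $S_{ev}(\mathbb{Z})$. For this, exploit the holomorphy of $\phi$ in $S_p^{\circ}$ and the continuity on $\partial S_p$: apply Cauchy's theorem to the rectangle with horizontal sides $[-\tau/2,\tau/2]$ and $[-\tau/2,\tau/2]\mp i\delta_p$ (the sign chosen opposite to that of $n$). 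The vertical contributions cancel thanks to $\tau$-periodicity of $\phi$ and of $q^{-ins}$, so
\[
c_n=\frac{1}{\tau}\int_{-\tau/2\mp i\delta_p}^{\tau/2\mp i\delta_p}\phi(s)\,q^{-ins}\,ds,
\]
and on this shifted contour $|q^{-ins}|=q^{-|n|\delta_p}$. Integrating by parts $m$ times (the boundary terms again cancel by periodicity, and $\phi^{(m)}$ is bounded on $S_p$ by $\mu_m(\phi)$) gives
\[
|c_n|\le \frac{C\,\mu_m(\phi)}{(1+|n|)^m}\,q^{-|n|\delta_p},
\]
so $\lambda_m(q^{\delta_p|\cdot|}c)\le C\,\mu_m(\phi)$. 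Invoking the Cowling--Meda--Setti isomorphism we obtain $f\in\mathcal{S}_p(\mathfrak{X})^{\#}$ with $\mathcal{A}f(n)=c_n$, and the identity $\hat{f}=\phi$ is the content of Fourier inversion on $\mathbb{T}$. Continuity of the inverse map follows from this last estimate together with (\ref{eq16}), completing the proof. The only nontrivial input is the contour-shifting-plus-integration-by-parts argument that upgrades mere $L^\infty$-boundedness of $\phi$ in the strip to the full exponential-times-rapid decay of its Fourier coefficients.
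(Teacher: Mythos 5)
Your proposal is correct and follows essentially the same route as the paper: both factor $f\mapsto\hat f$ through the Cowling--Meda--Setti Abel-transform isomorphism, prove continuity of the forward map via uniform convergence and term-by-term differentiation of the series $\sum_n\mathcal{A}f(n)q^{inz}$, and recover the inverse by shifting the Fourier-coefficient contour into the strip and integrating by parts. The only (cosmetic) difference is that the paper shifts the contour to $\Im z=\pm\delta_r$ for $r\in(p,2]$ and then lets $r\to p$ by dominated convergence, whereas you shift directly to the boundary line $\Im z=\pm\delta_p$, which requires the standard continuity-up-to-the-boundary form of Cauchy's theorem but is otherwise the same argument.
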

\begin{proof}
Fix $p\in(1,2)$ (proof of the case $p=2$ is similar). Let $f\in\mathcal{S}_p(\mathfrak{X})^{\#}$ and $z\in S_p$. Then it is clear that the infinite series (\ref{eq15}) converges uniformly on $S_p$ and consequently $\hat{f}$ is well-defined. The analyticity of $\hat{f}$ on $S_p^{\circ}$ follows directly from the analyticity of $q^{inz}$ together with the fact that the infinite series (\ref{eq15}) converges uniformly on any compact subset of $S_p^{o}$. Infact for every $m\in\mathbb{Z}_{+}$,
$$\hat{f}^{(m)}(z)=\sum\limits_{n\in\mathbb{Z}}(in\log q)^m\mathcal{A}f(n)q^{inz},\quad\text{for all } z\in S_p^{o}.$$
The above expression together with equation (\ref{eq16}) implies that for every semi-norm $\mu_{m}$ of $\mathcal{H}(S_{p})^{\#}$, there exists a semi-norm $\nu_{(m+4)}$ of $\mathcal{S}_p(\mathfrak{X})^{\#}$ such that,
$$\mu_{m}(\hat{f})\leq C\nu_{(m+4)}(f)\quad\text{ for all }f\in\mathcal{S}_p(\mathfrak{X})^{\#}.$$
Conversely, assume $g\in\mathcal{H}(S_{p})^{\#}$. Then for all $r$ with $p<r\leq 2$, the function $g(\cdot+i\delta_{r})$ is an infinitely differentiable function of period $\tau$. Hence g has a Fourier series representation of the form $g(s)=\sum\limits_{n\in\mathbb{Z}}F(n)q^{ins}$, where
$$F(n)=\frac{1}{\tau}\int\limits_{-\tau/2}^{\tau/2}g(s)q^{-ins}ds$$
yields the $n^{\text{th}}$ Fourier co-efficient of the function $g$. Our aim is to prove that $F\in q^{-\delta_{p}|\cdot|}S_{ev}(\mathbb{Z})$. Applying the Cauchy's integral theorem to $g$, it is easy to verify that for every $r\in(p,2]$ and $n\in\mathbb{Z}$,
\begin{align}
F(n)&=\frac{1}{\tau}\int\limits_{-\tau/2}^{\tau/2}g(s+i\delta_{r})q^{-in(s+i\delta_{r})}ds=\frac{1}{\tau}\int\limits_{-\tau/2}^{\tau/2}g(s-i\delta_{r})q^{-in(s-i\delta_{r})}ds.\label{eq19}
\end{align}
Infact the first equality in (\ref{eq19}) can be proved using the closed rectangle
\begin{multline*}
\Gamma(z)=\{z\in\mathbb{C}:\Im z=0,-\tau/2\leq \Re z\leq\tau/2\}\cup\{z\in\mathbb{C}: \Re z=\tau/2,0\leq \Im z\leq\delta_{r}\}\\
\cup\{z\in\mathbb{C}:\Im z=\delta_{r},\tau/2\leq \Re z\leq-\tau/2\}\cup\{z\in\mathbb{C}:\Re z=-\tau/2,\delta_{r}\leq \Im z\leq 0\}.
\end{multline*}
Using the identities (\ref{eq19}) and noting that $g$ is even, one can easily prove that $F(-n)=F(n)$ for all $n\in\mathbb{N}$, that $F$ is even in $\mathbb{Z}$. Integrating by parts the second equation in (\ref{eq19}) (m times) and further using the Dominated convergence theorem and letting $r\rightarrow p$ we have,
\begin{equation}\label{eq21}
\lambda_{m}(q^{\delta_{p}|\cdot|}F)\leq C \mu_{m}(g)\quad\text{for every }m\in\mathbb{Z}_{+}.
\end{equation}
Hence there exists an unique $f\in\mathcal{S}_p(\mathfrak{X})^{\#}$ such that $\mathcal{A}f=F$ and $g=\hat{f}$. Further using equations (\ref{eq16}) and (\ref{eq21}) we conclude that
$$\nu_{m}(f)\leq C \mu_{m}(\hat{f})\quad\text{for every }m\in\mathbb{Z}_{+}.$$
This completes the proof.
\end{proof}

%%%%%%%%%%%%%%%%%%%%%%%%%%%%%%%%%%%%%%%%%%%%%%%%%%%%%%%%%%%%%%%%%%%%%%%%%%%%
 
\end{document}